\newcommand{\C}{\mathbb{C}}
\newcommand{\tensor}{\mathop{\otimes}}
\newcommand{\dirsum}{\mathop{\oplus}}
\newcommand{\Sym}{S}
\newcommand{\g}{\mathfrak{g}}
\newcommand{\h}{\mathfrak{h}}
\newcommand{\n}{\mathfrak{n}}
\newcommand{\Sl}{\mathfrak{sl}}
\newcommand{\sgn}{\mathrm{sgn}}
\newcommand{\exterior}{{\textstyle\bigwedge}}
\newcommand{\rank}{r}
\newcommand{\dash}{\nobreakdash-\hspace{0pt}}
\newcommand{\End}{\qopname\relax o{End}}
\newcommand{\ad}{\mathop{ad}}
\newcommand{\K}{\Phi}
\newcommand{\J}{J}
\newcommand{\Cl}{{\mathit{C}\!\ell}}
\newcommand{\rmatr}{\mathfrak{r}}
\newcommand{\zerobar}{{\bar 0}}
\newcommand{\onebar}{{\bar 1}}
\newcommand{\pr}{\mathit{pr}}
\newcommand{\id}{\mathrm{id}}
\theoremstyle{plain}
\newtheorem{theorem}{Theorem}
\newtheorem{proposition}[theorem]{Proposition}
\newtheorem{lemma}[theorem]{Lemma}
\newtheorem{corollary}[theorem]{Corollary}
\theoremstyle{definition}
\theoremstyle{remark}
\newtheorem{remark}[theorem]{Remark}
\numberwithin{theorem}{section}
\begin{document}
\title[Harish-Chandra isomorphism for Clifford algebras]{The
  Harish-Chandra isomorphism for Clifford algebras} 
\author{Yuri Bazlov}
\address{Mathematics Institute, University of Warwick, Coventry CV4
  7AL, United Kingdom}
\email{y.bazlov@warwick.ac.uk}
\begin{abstract}
We study an analogue of the Harish-Chandra homomorphism where the
universal enveloping algebra $U(\g)$ is replaced by the Clifford
algebra, $\Cl(\g)$, of a semisimple Lie algebra $\g$. 
Two main goals are achieved. First, we prove that there is a
Harish-Chandra type isomorphism between the subalgebra of $\g$\dash
invariants in $\Cl(\g)$ and the Clifford algebra $\Cl(\h)$ of the
Cartan subalgebra of $\g$. Second, the Cartan subalgebra $\h$ is
identified, via this isomorphism, with the graded space of the so-called
primitive skew\dash symmetric invariants of $\g$. The grading leads to a
distinguished orthogonal basis of $\h$, which turns out to be induced
from the Langlands dual Lie algebra $\g^\vee$ via the action of its
principal three\dash dimensional subalgebra.
This settles a conjecture of Kostant.  
\end{abstract}
\maketitle

\setcounter{tocdepth}{1}
\section*{Introduction}

Introduced by Harish\dash Chandra 
more than half a century ago, 
the Harish\dash Chandra homomorphism %
is of utmost significance in 
representation theory of semisimple Lie groups and algebras; 
character theory is one of the areas where it  plays a key role.
Recall that, given a complex semisimple Lie
algebra $\g$ and its Cartan subalgebra $\h$, 
the Harish\dash Chandra homomorphism is a one-to-one algebra
map between the centre $Z(\g)$ of the universal enveloping algebra
$U(\g)$ of $\g$ and the algebra $S(\h)^W$ of (translated) Weyl group
$W$ invariant polynomial functions on the space $\h^*$. 
Because $W$ is a finite reflection group, by the well-known
Chevalley\dash Shephard\dash Todd theorem 
$S(\h)^W$ is a polynomial algebra. The Harish\dash Chandra map
thus establishes the 
polynomiality of the algebra $Z(\g)$ and identifies the characters of
$Z(\g)$ with $W$\dash orbits in $\h^*$.  

The subject of the present paper is a natural analogue of the
Harish\dash Chandra homomorphism where $U(\g)$ is replaced with the
Clifford algebra, $\Cl(\g)$, of a semisimple Lie algebra $\g$.
This analogue is based on a remarkably easy algebraic construction,
which underlines the ``classical'' Harish\dash Chandra homomorphism but 
applies to a class of algebras much wider than that of universal
enveloping algebras $U(\g)$. Let $A$ be an associative algebra that
factorises as $A_- \tensor A_0 \tensor A_+$,  
where $A_\pm$ and $A_0$ are subalgebras in $A$ and the tensor product is
realised by the multiplication in $A$; we stress that $A_-$, $A_0$ and
$A_+$ need not commute in $A$.
Some further conditions on this
factorisation guarantee
that there exists a projection (in general, not an algebra map) 
$\pr=\varepsilon_-\tensor \id \tensor \varepsilon_+$ of $A$ onto its
subalgebra $A_0$. (General details are discussed in \cite{BB}.)

In the case $A=U(\g)$, the  standard triangular
decomposition  $\g=\n_-\dirsum \h\dirsum \n_+$ of a semisimple Lie
algebra $\g$ gives rise to a factorisation as above 
with $A_\pm = U(\n_\pm)$ and $A_0 = U(\h) = S(\h)$ the polynomial
algebra. The Harish\dash Chandra map is
obtained by restricting $\pr$ to $Z(\g)=U(\g)^\g$, the $\ad \g$\dash
invariants in $U(\g)$. 
(This method also works for the quantised universal 
enveloping algebra $U_\hbar(\g)$ \cite{T}.) 
The same approach is used in \cite{WH} to give a completely algebraic  
construction of the homomorphism  $\mathcal D(\g)^\g \to \mathcal
D(\h)^W$, also due to Harish-Chandra (here
$\mathcal D(\cdot)$ stands for polynomial differential operators).

The Clifford algebra $\Cl(\g)$ factorises as a product of three of its
subalgebras, $\Cl(\n_-)$, $\Cl(\h)$ and $\Cl(\n_+)$, 
therefore admitting a Harish-Chandra map $\K\colon \Cl(\g)\to\Cl(\h)$. 
Studying the map~$\K$ involves structural theory of the
Clifford algebra $\Cl(\g)$, based mostly on results of Kostant. In the 
present paper, we focus on the restriction of the
Harish-Chandra map $\K$ to the subalgebra
$\J=\Cl(\g)^\g$ of $\g$\dash invariants, which in the Clifford
algebra plays a role  similar to that of the centre $Z(\g)=U(\g)^\g$ in
$U(\g)$.  Indeed, Kostant's ``separation of
variables'' result \cite{Ko} 
states that $\Cl(\g)=E\tensor \J$ is a free module
over the algebra $\J$, and 
$\J$ is further described as a Clifford algebra $\Cl(P)$
of a remarkable space $P$ of the so-called primitive invariants. 

Our first main result, Theorem~\ref{thm:main1}, shows that $\K$
restricts to an isomorphism $\J\xrightarrow{\sim}\Cl(\h)$. Comparing
this to the universal enveloping algebra, it is worth
noting that the action of the Weyl group $W$ on $\Cl(\h)$ is
conspicuously absent from the picture. We then refine the isomorphism
result by
showing that the Harish-Chandra map $\K$ identifies the space $P$ of
primitive invariants with the Cartan subalgebra $\h$ of $\g$. 

The above immediately raises the next question: the space $P$ is naturally
graded, via its inclusion in the exterior algebra $\exterior \g$ and
its identification with primitive homology classes in the homology of
$\g$; what is the grading induced on the Cartan subalgebra by the
Harish-Chandra map? It turns out that the answer involves 
the so-called Langlands dual Lie algebra $\g^\vee$ of $\g$. This is
a complex semisimple Lie algebra with a root system dual to that of
$\g$. There is a canonical copy of $\Sl_2$ inside $\g^\vee$, and its
adjoint action on $\g^\vee$ splits $\g^\vee$ into a direct sum of
$\Sl_2$\dash submodules (``strings''). Intersections of these
``strings'' with the Cartan subalgebra $\h$, which is viewed as shared
between $\g$ and $\g^\vee$, define the graded components of $\h$. This
is established in Theorem~\ref{thm:main2} which is the second and
final main result of the paper. In particular, Theorem~\ref{thm:main2}
confirms a conjecture made by Kostant \cite{KC}. 

When $\g$ is a simple Lie algebra, the graded components in $\h$ typically are
one\dash dimensional, giving rise to a distinguished basis of
$\h$. (The only deviation from this occurs in a simple Lie algebra with
Dynkin diagram of type $D$ on an even number of nodes.) 
This basis is orthogonal with respect to the Killing form and
contains $\rho$, half the sum of positive roots of $\g$; we refer to
this basis of $\h$ as the (Langlands dual) principal basis of the Cartan
subalgebra. 

\subsection*{Acknowledgments}

Section~\ref{sect:iso} of the present paper is based on a 
revision of work done in my PhD thesis \cite{thesis}, which was completed 
with the guidance of Anthony Joseph. 
I am grateful to Bertram Kostant for conversations on the subject of
this paper; having seen partial results of \cite{thesis}, 
he suggested a conjectural
link to principal TDS bases, proved in the present paper. 
I thank Anton Alekseev for 
providing helpful ideas for the proof of Theorem~\ref{thm:main2}. 
These useful ideas and calculations are also made clear
in a recent preprint \cite{Rohr} by R.~P.~Rohr. 
I am grateful to Victor Ginzburg and Eckhard Meinrenken for their comments
on earlier versions of this work and to David Kazhdan for a stimulating
discussion. 
 
\tableofcontents


\section{The classical Chevalley projection and 
Harish-Chandra map}
\label{sect:firstsect}

We start by recalling the Chevalley projection map and the Harish\dash
Chandra map associated with a semisimple Lie algebra $\g$. This will
serve as an introduction to the subsequent treatment of 
the analogues of these for skew\dash symmetric tensors.

\subsection{Invariant symmetric tensors and the Chevalley projection}

Let $\g$ be a semisimple Lie algebra of rank $\rank$ over the complex
field $\C$. 
Let a Cartan subalgebra $\h$ and a Borel subalgebra $\mathfrak b$ of
$\g$, such that $\h\subset\mathfrak b$,  be fixed. This partitions
the root system of $\g$ into positive and negative parts, and 
gives rise to the triangular decomposition
\begin{equation*}
	\g = \n_- \dirsum \h \dirsum \n_+
\end{equation*}
of $\g$, where $\n_-$ and $\n_+$ are subspaces of $\g$ 
spanned by root vectors corresponding 
to negative and positive roots, respectively.
By $\Sym(\g)$ we denote the algebra of symmetric tensors over  $\g$,
which is the same as the algebra of polynomial functions on the space $\g^*$. 
It is graded by degree: $\Sym(\g)
=\bigoplus_{n=0}^\infty \Sym^n(\g)$, and we identify $\g$ with
$S^1(\g)$. The action of $\g$ on $\Sym(\g)$ is 
extended from the adjoint action of $\g$ on $\g=S^1(\g)$ by
derivations of degree $0$; we denote the action of $x\in \g$ by $ \ad
x\in \End \Sym(\g)$.   
We refer to the set 
$$
                      \J_S=\Sym(\g)^\g = \{f\in \Sym(\g) \mid (\ad x
                      )f=0 \ \forall x\in \g\}
$$ 
as the space of symmetric $\g$\dash invariants.  
Note that $\J_S$ is a graded subalgebra of $\Sym(\g)$.
It is obvious that the algebra  $\Sym(\g)$ has triangular
factorisation, 
$$
\Sym(\g) \cong \Sym(\n_-) \tensor \Sym(\h) \tensor \Sym(\n_+),
$$
into three subalgebras generated by $\n_-$, $\h$, $\n_+$,
respectively. Denote by $\varepsilon$ the character of an algebra of
polynomials given by the evaluation of a polynomial at zero. 
The homomorphism 
$$
\Psi_0 = \varepsilon \tensor \id \tensor \varepsilon \colon
\Sym(\g)\to \Sym(\h) 
$$
of commutative algebras 
is what is typically called  the Chevalley projection map.
By a classical result of Chevalley, see \cite[Theorem 7.3.7]{Dix}, 
the restriction of $\Psi_0$ to $\J_S$ is a graded
algebra isomorphism
\begin{equation*}
	\Psi_0\colon \J_S \xrightarrow{\sim} \Sym(\h)^W.
\end{equation*}
Here $\Sym(\h)^W$ denotes symmetric tensors over $\h$  
invariant under the action of the Weyl group $W$ of $\g$ (this action
is extended from $\h$ to $\Sym(\h)$). 

\subsection{The classical Harish-Chandra map}
\label{subsect:cHC}

Let $U(\g)$ be the universal enveloping algebra of $\g$, and 
$$
    \J_U = U(\g)^{\g}
$$ 
be its centre, which is the ring of invariants
of the adjoint representation of $\g$ in $U(\g)$.
The Poincar\'e\dash Birkhoff\dash Witt symmetrisation map 
$$
\beta\colon \Sym(\g) \to U(\g)
$$ 
is a
$\g$\dash module isomorphism between $\Sym(\g)$ and $U(\g)$, where
$\beta(x_1x_2\dots x_n)$ for $x_i\in \g$ is defined as 
$\frac{1}{n!}$ times the sum of $x_{\pi(1)}x_{\pi(2)}\dots
x_{\pi(n)}\in U(\g)$ over all permutations $\pi$ in $n$ letters. 
In particular, $\beta$ identifies $\J_S$ and $\J_U$ as linear spaces.
Note that $\beta$ is not an algebra isomorphism between $\J_S$ and
$\J_U$. (An algebra isomorphism $\J_S\xrightarrow{\sim}\J_U$, known as
the Duflo map, was explicitly constructed  in \cite{D}.) 
The Poincar\'e\dash Birkhoff\dash Witt theorem for $U(\g)$ implies
the triangular factorisation 
$$
    U(\g) = U(\n_-) \tensor U(\h) \tensor U(\n_+)
$$
of the algebra $U(\g)$ into the subalgebras generated by $\n_-$, $\h$,
$\n_+$, respectively. Let $\varepsilon_-\colon U(\n_-)\to \C$ be
the algebra homomorphism defined by $\varepsilon_-(x)=0$ for $x\in
\n_-$, and let $\varepsilon_+\colon U(\n_+)\to \C$ be defined
similarly. We will refer to the map  
$$
    \Psi = \varepsilon_-\tensor \id \tensor \varepsilon_+ \colon U(\g)
    \to U(\h) 
$$
as the Harish\dash Chandra map, slightly abusing the terminology. 
As $\h$ is an Abelian Lie algebra, $U(\h)$ is identified with the
polynomial algebra $\Sym(\h)$. 
The map $\Psi$ is not an algebra homomorphism, but its restriction to 
the subalgebra $U(\g)^\h = \{f\in U(\g) \mid (\ad h)f=0 \ \forall h\in
\h\}$ is. Further restricting $\Psi$ to $\J_U$ which is a subalgebra
of $U(\g)^\h$, one obtains the isomorphism  
$$
       \Psi \colon \J_U \xrightarrow{\sim} \Sym(\h)^{W_\cdot}
$$
between the centre of $U(\g)$ and the ring $\Sym(\h)^{W_\cdot}$ 
of symmetric tensors over $\h$ invariant under the shifted action of
$W$. (The shifted action of $w\in W$ is defined on $\lambda\in \h^*$ by 
$w.\lambda=w(\lambda+\rho)-\rho$ where $\rho\in \h^*$ 
is the half sum of positive
roots of $\g$, and hence on $\Sym(\h)$ which is viewed 
as the algebra of polynomial functions on
$\h^*$.) This isomorphism is due to Harish\dash Chandra; 
see \cite{HC}, \cite[Theorem 7.4.5]{Dix}.

\subsection{An analogue of the Chevalley projection for $\exterior\g$}

We are going to consider $\exterior\g$, the exterior algebra of 
$\g$, as a ``skew\dash symmetric analogue'' of
$\Sym(\g)$. Furthermore, 
the universal enveloping algebra $U(\g)$ --- a deformation of $S(\g)$
--- will be replaced by
$\Cl(\g)$, the Clifford algebra of $\g$, which is a deformation of
$\exterior \g$. 
We will now discuss the analogue of the Chevalley projection 
$\Psi_0$ 
(and later, of the Harish\dash Chandra map $\Psi$) in the skew\dash
symmetric situation.


%
%

The action of $\g$ on the finite\dash dimensional algebra
$\exterior\g=\bigoplus_{n=0}^{\dim \g} \exterior^n \g$ is an extension
of the adjoint action of $\g$ as a derivation of degree $0$. Let us
denote the action of 
$x\in\g$ by $\theta(x)\in\End(\exterior\g)$; that is, 
$\theta(x)y=[x,y]$ where $y\in \wedge^1 \g = \g$, and 
$\theta(x)(u\wedge v) = (\theta(x)u)\wedge v + u\wedge \theta(x)v$ for
$u,v\in\exterior\g$. The subspace
$$
     \J =(\exterior\g)^{\g} = \{u\in \exterior\g \mid \theta(x)u
     =0 \ \forall x\in \g\},
$$
of $\theta(\g)$ invariants (the invariant
skew\dash symmetric tensors over $\g$) 
is a graded $\wedge$\dash subalgebra of $\exterior\g$.
The triangular factorisation 
$$
   \exterior \g = \exterior \n_- \tensor \exterior \h \tensor
   \exterior \n_+
$$
and the ``augmentation maps'' $\varepsilon_\pm \colon \exterior\n_\pm
\to \C$ which are algebra homomorphisms  uniquely defined by
$\varepsilon_\pm(\n_\pm)=0$,  give rise to a degree\dash preserving 
projection map 
$$
    \K_0=\varepsilon_-\tensor \id \tensor \varepsilon_+ \colon
    \exterior\g \to \exterior\h\ .
$$
Let us consider the 
restriction of $\K_0$ to $\J$. 
In contrast to the Chevalley projection map for symmetric tensors, 
this restriction fails to be one-to-one: one has 
\begin{equation*}
	\K_0(\J)=\exterior^0\h=\C.
\end{equation*}
Indeed, one shows that the $\K_0$\dash image of $\J$ must
lie in the $W$\dash invariants in $\exterior \h$. However, the fixed
points of $W$ in $\exterior \h$ are just the one\dash dimensional space
$\exterior^0\h$; see \cite[Section 5.1]{GP}.

Although the injectivity of the Chevalley projection $\K_0$ on $\exterior \g$
fails so miserably, passing to its counterpart $\K$ (a Clifford algebra
version of the Harish\dash Chandra map) rectifies the situation, as we
will discover in due course.

\section{Clifford algebras} 

In this Section, we recall some basics on  Clifford algebras
associated to quadratic forms on complex vector spaces, 
define the algebra $\Cl(\g)$, and introduce the Clifford algebra
version of the Harish-Chandra map.  

\subsection{Identification of $\Cl(V)$ with $\exterior V$}
\label{subsect:sigma}

Let $V$ be a finite\dash dimensional vector space over $\C$, equip\-ped
with a symmetric bilinear form $(\,,\,)$. The Clifford algebra
$\Cl(V)$ of $V$ is the quotient of the full tensor algebra $T(V)$
modulo the two\dash sided ideal generated by $\{x\otimes x-(x,x) \mid
x\in V \}$. 
We will denote the Clifford product of $u,v\in\Cl(V)$ by $u\cdot v$. 
An isomorphic image of the space $V=T^1(V)$ is contained in $\Cl(V)$;  
one has $xy+yx=2(x,y)$ for $x,y\in V$. 

A convenient point of view that we adhere to in the present paper is that 
the Clifford algebra $\Cl(V)$ has the same underlying linear space as
$\exterior V$, but the Clifford product is a deformation of the
exterior product. Explicitly, following \cite{Ko}, for $x\in V$ 
let the operator $\iota(x)\colon \exterior^1 V \to \exterior^0 V$ 
be defined by $\iota(x)y = (x,y)$ 
where $y\in V$. Extend $\iota(x)$ to a superderivation of $\exterior
V$ of degree $-1$, i.e., by the rule  
$$
\iota(x)(u\wedge v)=(\iota(x)u)\wedge v
+(-1)^{|u|} u \wedge\iota(x)v, \qquad u\in \exterior^{|u|}V, \ v\in
\exterior V. 
$$
We refer to $\iota(x)\colon \exterior V \to \exterior V$ as the
contraction operator associated to $x\in V$.  
Now define the operator $\gamma(x)\in \End(\exterior V)$ by
$\gamma(x)u = x\wedge u + \iota(x)u$, $u\in\exterior\g$.  
The superderivation property of $\iota(x)$ and the fact that
$\iota(x)^2=0$ and $x\wedge x=0$ imply that $\gamma(x)^2$ 
is multiplication by the scalar $(x,x)$. Therefore, $\gamma$ extends 
to a homomorphism
$\gamma\colon \Cl(V) \to \End(\exterior V)$. The linear map 
$$
    \sigma\colon \Cl(V)\to \exterior V, \qquad \sigma(u) = \gamma(u)1,
$$
is bijective, cf.\ \cite[Theorem II.1.6]{Chev}. 
It is this map $\sigma$ that is used to identify the underlying linear
space of $\Cl(V)$ with $\exterior V$. 

\begin{remark}
\label{rem:braided}
The symmetric algebra $S(V)$ and the exterior algebra $\exterior V$ of
a vector space $V$ are the simplest examples of the so\dash called Nichols
algebras (terminology introduced by Andruskiewitsch and Schneider). 
Nichols algebras are Hopf algebras in a braided category;
see \cite{N,W,AS,B} for background. In particular, Nichols algebra
$\mathcal B(V)$ of a braided space $V$ (a space equipped with an
invertible operator $c\in \End(V\tensor V)$ satisfying the quantum Yang\dash
Baxter equation) has a set of braided derivations $\partial_\xi$,
indexed by $\xi\in V^*$, that satisfy the braided Leibniz rule
inferred from the braiding $c$. 
One takes the braiding $c(x\tensor y) = y\tensor x$, respectively
$c(x\tensor y)=-y\tensor x$, to obtain the
Nichols algebra $S(V)$, respectively $\exterior V$, of $V$.
Via the map $V \to V^*$ given by the form $(\,,\,)$, the 
partial derivative $\frac{\partial}{\partial x}$ and the contraction operator
$\iota(x)$, $x\in V$,
are braided derivations of $S(V)$ and $\exterior V$, respectively. 
\end{remark}

We also remark that the inverse to the map $\sigma\colon
\Cl(V)\to\exterior V$ can be written, in
a completely different fashion, as the skew\dash
symmetrisation map 
$$
    \sigma^{-1} = \beta_\wedge \colon \exterior V \to \Cl(V), 
\qquad
    \beta_\wedge(x_1\wedge \dots \wedge x_n)= \frac{1}{n!} \sum_\pi 
     (\sgn\ \pi)x_{\pi(1)}x_{\pi(2)} \dots x_{\pi(n)},
$$
where $x_i\in V$ and the sum on the right is over all permutations
$\pi$ of the indices $1,\dots,n$. 

\subsection{Algebras $\Cl_\hbar(V)$}

To emphasise our earlier point that the Clifford product on $\exterior
V$ is a deformation of the wedge product, and for later use in calculations,
we introduce a complex\dash valued deformation
parameter $\hbar$. To each value of
$\hbar$ we associate a bilinear form $(x,y)_\hbar:=\hbar\cdot (x,y)$ on~$V$. Denote by $\Cl_\hbar(V)$ the Clifford algebra of the form
$(\,,\,)_\hbar$. Observe that, for $\hbar\ne 0$, the algebra
$\Cl_\hbar(V)$ is isomorphic to the original Clifford algebra 
$\Cl(V)=\Cl_\hbar(V)|_{\hbar=1}$, because the linear map $V \to
V$, $x\mapsto \hbar^{1/2}x$, extends to an 
algebra isomorphism $\Cl_\hbar(V)\to\Cl(V)$. On the other hand, 
$\Cl_\hbar(V)|_{\hbar=0}$ coincides with the exterior algebra
$\exterior V$ and is not isomorphic to $\Cl(V)$ unless the form
$(\,,\,)$ is identically zero. 

This construction gives rise to a family of Clifford products
$\{\cdot_\hbar \mid \hbar\in\C\}$ on the space $\exterior V$. The way
the product $a\cdot_\hbar b$ depends on $\hbar$ is described in 
\begin{lemma}
\label{lem:taylor}
If $a\in \exterior^i V$, $b\in\exterior^j V$ are homogeneous
elements in the exterior algebra of $V$, there exist 
$u_{i+j-2s}\in \exterior^{i+j-2s}V$ for $s=1,2,\dots,\lfloor
\frac{i+j}{2}\rfloor$ such that
$$
    a\cdot_\hbar b = a\wedge b + \hbar^{\phantom{2}}u_{i+j-2} + \hbar^2 u_{i+j-4} +
    \dots = a\wedge b + \sum_{1\le s \le (i+j)/2} \hbar^s u_{i+j-2s}.
$$ 
\end{lemma}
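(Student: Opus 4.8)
The plan is to deform the Clifford product continuously in $\hbar$ and track the filtration degree. First I would recall that $\Cl_\hbar(V)$ is, for every $\hbar$, a filtered algebra with the filtration inherited from the tensor algebra $T(V)$; the associated graded is $\exterior V$, independently of $\hbar$. Under the identification $\sigma_\hbar\colon\Cl_\hbar(V)\xrightarrow{\sim}\exterior V$ of Section~\ref{subsect:sigma}, the product $a\cdot_\hbar b$ of $a\in\exterior^i V$, $b\in\exterior^j V$ is a sum of homogeneous components of degrees $i+j,\ i+j-2,\ i+j-4,\dots$, with leading term $a\wedge b$ (this is the standard parity/filtration statement for Clifford algebras; it follows, for instance, from the fact that $\gamma(x)$ raises degree by one via $x\wedge(-)$ and lowers it by one via $\iota(x)$, so $\gamma(a\cdot_\hbar b)1$ has a definite parity and is supported in degrees $\le i+j$ of the same parity as $i+j$). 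Thus one can write
\begin{equation*}
	a\cdot_\hbar b = \sum_{0\le s\le (i+j)/2} c_s(\hbar),\qquad c_s(\hbar)\in\exterior^{i+j-2s}V,
\end{equation*}
where a priori $c_s(\hbar)$ is merely some function of $\hbar$ and $c_0(\hbar)=a\wedge b$ for all $\hbar$ (the $\hbar\to 0$ limit being the wedge product).

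The heart of the argument is then to show that each $c_s(\hbar)$ is exactly $\hbar^s$ times a constant. For this I would use the rescaling isomorphism already noted in the text: for $\hbar\ne 0$ the map $\delta_\hbar\colon x\mapsto \hbar^{1/2}x$ extends to an algebra isomorphism $\Cl_\hbar(V)\to\Cl(V)=\Cl_1(V)$, and on the underlying space $\exterior V$ it acts on $\exterior^n V$ as multiplication by $\hbar^{n/2}$. Applying $\delta_\hbar$ to $a\cdot_\hbar b$ (with $a,b$ of degrees $i,j$) gives $(\delta_\hbar a)\cdot_1(\delta_\hbar b) = \hbar^{(i+j)/2}\, (a\cdot_1 b)$, while applying it componentwise to the right-hand side gives $\sum_s \hbar^{(i+j-2s)/2}c_s(1)$. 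Comparing the two and using that the decomposition into homogeneous exterior degrees is canonical, one reads off, for each $s$,
\begin{equation*}
	c_s(\hbar) = \hbar^{s}\, c_s(1),
\end{equation*}
valid first for $\hbar\ne 0$ and then for all $\hbar$ by continuity (polynomiality) of $\hbar\mapsto a\cdot_\hbar b$, which is clear since the structure constants of $\cdot_\hbar$ are linear in $\hbar$. Setting $u_{i+j-2s}:=c_s(1)$ for $1\le s\le\lfloor(i+j)/2\rfloor$ yields the claimed formula.

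A mild alternative, if one prefers to avoid the square root, is to argue by bilinearity and induction on $j$: the case $j=0$ and $j=1$ follow directly from $\gamma_\hbar(x)u = x\wedge u + \hbar\,\iota(x)u$ (so $x\cdot_\hbar u = x\wedge u + \hbar\,\iota(x)u$), and the inductive step writes $b$ as a sum of terms $y\wedge b'$ with $\deg b'=j-1$, expands $a\cdot_\hbar(y\cdot_\hbar b' - \hbar\,\iota(y)b')$ using associativity, and collects powers of $\hbar$; the degree bookkeeping shows the powers of $\hbar$ match the drop in exterior degree exactly. The main obstacle — really the only subtle point — is making the degree/parity claim precise, i.e.\ that $a\cdot_\hbar b$ has no components in degrees of the wrong parity or above $i+j$; everything else is bookkeeping. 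I would dispatch this cleanly via the $\gamma_\hbar$-representation on $\exterior V$, noting that $\gamma_\hbar(x)$ is the sum of an operator of degree $+1$ and one of degree $-1$, hence $\gamma_\hbar(a)$ for $a\in\exterior^i V$ is a sum of operators of degrees $i, i-2,\dots,-i$, and applying to $1\in\exterior^0 V$ then to $b$ gives the stated support.
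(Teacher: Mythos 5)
Your main argument is correct but takes a genuinely different route from the paper. The paper's proof is a direct induction on the degree $i$ of $a$: the base cases $i=0,1$ are read off from the formula $x\cdot_\hbar u = x\wedge u + \hbar\,\iota(x)u$, and for $i\ge 2$ one writes $a = x\wedge a' = x\cdot_\hbar a' - \hbar\,\iota(x)a'$ and collects terms using the inductive hypothesis on the two degree-lowered products. Your preferred proof instead splits the statement into two independent facts: (i) a degree-support claim, that $a\cdot_\hbar b$ has homogeneous components only in degrees $i+j, i+j-2, \dots$ (which you justify via the operator $\gamma_\hbar(x) = x\wedge(-) + \hbar\,\iota(x)$ being a sum of a degree-$(+1)$ and a degree-$(-1)$ piece); and (ii) a scaling argument using the rescaling isomorphism $\Cl_\hbar(V)\xrightarrow{\sim}\Cl_1(V)$, $x\mapsto\hbar^{1/2}x$, which on $\exterior^n V$ acts as $\hbar^{n/2}$ once one notes that $\sigma_\hbar^{-1}=\beta_\wedge$ is the $\hbar$-independent skew-symmetrisation. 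The scaling argument is more conceptual — it makes it transparent \emph{why} the power of $\hbar$ must equal the drop in exterior degree divided by two (it is a weight for a $\C^\times$-action) — at the cost of needing (i) as a separate input; the paper's induction delivers both the degree bound and the power of $\hbar$ in one pass and is more elementary. (Your ``mild alternative'' is essentially the paper's proof with the roles of $a$ and $b$ swapped.) One small point: you don't actually need the continuity/polynomiality step at the end, since $a\cdot_0 b = a\wedge b$ by definition of $\Cl_0(V)=\exterior V$, which already agrees with the formula $c_s(\hbar)=\hbar^s c_s(1)$ at $\hbar=0$.
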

\begin{proof}
The statement is proved by induction in $i$, the degree of $a$. If
$i=0$, put $u_{0+j-2s}=0$ for all $s$. If $i=1$ and $a=x\in V$, then 
$a\cdot_\hbar b=x\wedge b + \hbar \, \iota(x) b$. In this case, put 
$u_{1+j-2}= \iota(x) b$ and $u_{1+j-2s}=0$ for all $s>1$. 

To prove the assertion for $i\ge 2$, one may assume
that $a=x\wedge a'$ for some $x\in V$ and $a'\in\exterior^{i-1} V$. Put 
$a''=\hbar \, \iota(x) a'$ so that the element $a''$ is homogeneous of
degree $i-2$ in $\exterior V$. 
Then $a=x\cdot_\hbar a' - a''$, hence $a\cdot_\hbar b=x\cdot_\hbar
(a'\cdot_\hbar b) - a''\cdot_\hbar b$. By the induction hypothesis, 
the Clifford products $a'\cdot_\hbar b$ and $a''\cdot_\hbar b$ have
required expansions in $\exterior V$.
It only remains to apply the $i=1$ case to the product $x\cdot_\hbar
(a'\cdot_\hbar b)$ and to collect the terms, which gives the required
expansion for $a\cdot_\hbar b$.
\end{proof}

\subsection{The superalgebra structure on $\Cl(V)$}

Clearly, the Clifford product on the exterior algebra $\exterior V$
does not respect the grading on $\exterior V$, and $\Cl(V)$ is not a
graded algebra. It is, however, easy to see (and is apparent from
Lemma~\ref{lem:taylor}) that $\Cl(V)$ is still a superalgebra:
$$
    \Cl(V) = \Cl^\zerobar(V) \dirsum \Cl^\onebar(V),
$$
with $\Cl^i(V)=\sum_{n\ge 0}\exterior^{2n+i}V$
for $i=\zerobar,\onebar$ (residues modulo $2$).
For later use, we observe the fact that the contraction operators
$\iota(x)\colon \exterior V \to \exterior V$ 
are superderivations with respect to the Clifford
multiplication:
\begin{lemma}
\label{lem:superderiv}
For any $i\in \{\zerobar,\onebar\}$, 
$x\in V$, $u\in \Cl^i(V)$ and $v\in \Cl(V)$,  
$$
    \iota(x)(u\cdot v) = (\iota(x)u)\cdot v + (-1)^i u\cdot \iota(x)v. 
$$
\end{lemma}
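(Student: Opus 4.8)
The plan is to reduce the statement to the case where $v$ is a single vector in $V$, and then prove that case by a direct calculation using the defining relations of the Clifford algebra. First I would observe that both sides of the claimed identity are, for fixed $u\in\Cl^i(V)$, linear in $v$; moreover, if the identity holds for $v=v_1$ and $v=v_2$ (each a homogeneous element, say of parity $\onebar$ without loss of generality after the usual decomposition), then it holds for $v=v_1\cdot v_2$, by a routine computation that uses $\iota(x)$'s known superderivation property \emph{twice} and the fact that multiplication by an odd element swaps the parity superscript on $u$. (One must be a little careful here: in the product rule for $u\cdot(v_1\cdot v_2)$ the ``middle'' application of $\iota(x)$ acts on $u\cdot v_1$, which has parity $i+\onebar$, so the signs bookkeep correctly.) Since $V$ generates $\Cl(V)$ as an algebra, and $\Cl(V)=\Cl^{\zerobar}(V)\dirsum\Cl^{\onebar}(V)$ is spanned by products of vectors, this reduces everything to $v=y\in V$.

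Next, by the same linearity-and-multiplicativity argument applied now in the first slot, it suffices to treat $u$ itself a product of vectors; and by yet another induction (on the number of factors in $u$, using the $v=y$ case together with associativity to peel off one factor at a time) one reduces to the base case $u=x'\in V$, $v=y\in V$. So the whole lemma comes down to verifying, for $x,x',y\in V$,
$$
\iota(x)(x'\cdot y) = (\iota(x)x')\cdot y - x'\cdot\iota(x)y,
$$
i.e.\ $\iota(x)(x'\cdot y) = (x,x')\,y - (x,y)\,x'$. For this I would compute $x'\cdot y$ in the exterior-algebra model: by the $i=1$ case of Lemma~\ref{lem:taylor} (with $\hbar=1$), $x'\cdot y = x'\wedge y + (x',y)$. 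Applying the contraction operator $\iota(x)$, which is a superderivation of degree $-1$ on $\exterior V$ killing scalars, gives $\iota(x)(x'\wedge y) = (\iota(x)x')\wedge y - x'\wedge\iota(x)y = (x,x')\,y - (x,y)\,x'$, which is exactly the right-hand side. That closes the base case.

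I do not expect a serious obstacle here; the content is entirely the sign bookkeeping in the two inductions. The one place that requires genuine care is making the ``multiplicativity in $v$'' step rigorous when $u$ is not homogeneous for the $\Z$-grading but only for the $\Z/2$-grading: one must phrase the inductive hypothesis in terms of the superscript $i\in\{\zerobar,\onebar\}$ throughout and check that replacing $u$ by $u\cdot v_1$ shifts $i$ by $\onebar$ exactly when $v_1$ is odd, so that the accumulated signs $(-1)^i$ telescope correctly. A clean way to organise this is to fix $x\in V$ and consider the operator $D=\iota(x)$ together with the parity automorphism $\alpha$ of $\Cl(V)$ (acting as $(-1)^i$ on $\Cl^i(V)$), and prove once and for all that $D(u\cdot v)=D(u)\cdot v+\alpha(u)\cdot D(v)$ holds on all of $\Cl(V)$; the associativity step then reads $D(u\cdot(v\cdot w)) = D(u)\cdot v\cdot w + \alpha(u)D(v)\cdot w + \alpha(u)\alpha(v)\cdot D(w)$ from either grouping, and $\alpha(uv)=\alpha(u)\alpha(v)$ makes the two agree.
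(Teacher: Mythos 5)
Your reduction to the case $u,v\in V$ does not actually close, and this is where the genuine content of the lemma lives. Consider the ``multiplicativity in $v$'' step: to go from $(u,v_1)$ and $(u,v_2)$ to $(u,v_1\cdot v_2)$ you group as $D\bigl((u\cdot v_1)\cdot v_2\bigr)$ and apply the Leibniz rule with left factor $u\cdot v_1$ --- an element that is no longer the fixed $u$, and typically has higher length. Your inductive hypothesis (``holds for the fixed $u$ with $v=v_1,v_2$'') does not cover the pair $(u\cdot v_1, v_2)$. The same circularity appears in the ``peel a factor off $u$'' induction. Your ``clean way'' paragraph only verifies that the would-be Leibniz rule is \emph{consistent} with associativity and $\alpha(uv)=\alpha(u)\alpha(v)$; that argument shows that the set $A=\{u : D(u\cdot v)=D(u)\cdot v+\alpha(u)\cdot D(v)\ \forall v\in\Cl(V)\}$ is a subalgebra, but to conclude $A=\Cl(V)$ you still need $V\subseteq A$, i.e.\ the case $u\in V$ with $v$ \emph{arbitrary}, which is strictly stronger than your base case $u,v\in V$ and cannot be bootstrapped from it by associativity alone.

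The missing ingredient is the operator identity $\iota(x)\iota(y)+\iota(y)\iota(x)=0$ on $\exterior V$. Once you have it, the case $u=y\in V$ with arbitrary $v$ follows directly: $\iota(x)(y\cdot v)=\iota(x)(y\wedge v+\iota(y)v)=(x,y)v-y\wedge\iota(x)v-\iota(y)\iota(x)v=(x,y)v-y\cdot\iota(x)v$, and then $V\subseteq A$ gives the lemma. The paper proves this anticommutation and the lemma together, in a way that bypasses your two inductions and the $\hbar$-expansion entirely: it rephrases the claim as a supercommutation of $\iota(x)$ with the left-multiplication operator $\gamma(u)=u\cdot{}$, reduces to $u=y\in V$ because the operators $\gamma(y)$ generate the algebra of all $\gamma(u)$, writes $\gamma(y)=y\wedge\cdot+\iota(y)$, handles the $y\wedge\cdot$ summand via the known $\wedge$-superderivation property, and handles $\iota(y)$ by observing that $\iota(x)\iota(y)+\iota(y)\iota(x)$ is an \emph{even superderivation of the wedge product} which vanishes on the generators $V$ and is therefore zero. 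That last observation is the whole point of the lemma, and it is the piece your outline does not supply. Your base-case computation with Lemma~\ref{lem:taylor} is correct as far as it goes, but by itself it is the ``easy'' $\exterior^2$ instance of the anticommutation, not a substitute for it.
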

\begin{proof}
We have to show that $\iota(x)$ supercommutes with $\gamma(u)\colon
\exterior V \to \exterior V$, the operator of the left Clifford
multiplication by $u$. Since $\gamma(u)$ is in the subalgebra of
$\End(\exterior V)$ generated by $\gamma(y)$, $y\in V$, 
it is enough to show that $\iota(x)$ supercommutes with $\gamma(y)$.
Write $\gamma(y)=(y\wedge \cdot )+\iota(y)$. Now, $\iota(x)$
supercommutes with the operator $y\wedge \cdot$ of the left exterior
multiplication by $y$, because $\iota(x)$ is a superderivation of the
wedge product. Finally, to show that $\iota(x)$ supercommutes with
$\iota(y)$, observe that, by a general fact about superderivations,
the supercommutator 
$\iota(x)\iota(y)+\iota(y)\iota(x)$ must be an even superderivation of the
wedge product; but it obviously vanishes on $V$, hence is identically
zero on~$\exterior V$.  
\end{proof}

\subsection{The Clifford algebra $\Cl(\g)$}

We are interested in the case when $V=\g$ is a semisimple Lie algebra.
We fix $(\,,\,)$ to be a non\dash degenerate $\ad$\dash invariant symmetric
bilinear form on $\g$. For example, $(\,,\,)$ may be the Killing form,
or be proportional to the Killing form with a non\dash zero coefficient. (If
$\g$ is simple, there are no other options.)  
We denote by $\Cl(\g)$ the Clifford algebra of $\g$ with respect to
the form $(\,,\,)$. 

Recall that by $\theta(g)$ is denoted the adjoint action of $g\in \g$
on the exterior algebra $\exterior \g$; one has 
$\theta(g)(x\wedge u)-x\wedge \theta(g)u = [g,x]\wedge u$ for
all $g,x\in \g$ and $u\in\exterior\g$. 
Furthermore, it is easy to see that the $\ad$\dash invariance of the
form $(\,,\,)$  
implies $\theta(g)\iota(x) - \iota(x)\theta(g) = \iota([g,x])$. Thus,
$\theta(g)$ is an (even) derivation of the Clifford product. 
It immediately follows that $\g$\dash invariants
$\J=(\exterior\g)^\g$
form a Clifford subalgebra in $\Cl(\g)$. Recall that $J$ is also a
wedge\dash subalgebra in $\exterior \g$. 
We will  elaborate on these two  non\dash isomorphic algebra
structures on $J$ in the next Section.

\subsection{The Harish-Chandra map $\K$ for $\Cl(\g)$}

The central object of the paper is 
the following analogue of the Harish-Chandra map, defined for the
Clifford algebra $\Cl(\g)$.
Observe that $\Cl(\g)$, like all the algebras considered so far,
factorises into its subalgebras, generated by the direct summands 
$\n_-$,  $\h$ and $\n_+$ in the triangular decomposition of
$\g$. These subalgebras are themselves Clifford algebras that
correspond to the restriction of the form $(\,,\,)$ on the
respective subspaces of $\g$: 
$$
   \Cl(\g) = \Cl(\n_-) \tensor \Cl(\h) \tensor \Cl(\n_+),
$$
where the tensor product is realised by the Clifford multiplication 
(note that the tensorands do not commute with respect to Clifford
multiplication). 
The subalgebras $\Cl(\n_\pm)$ are supercommutative and are 
isomorphic to the exterior algebras $\exterior \n_\pm$, 
because the restriction of
$(\,,\,)$ to $\n_-$ (respectively to $\n_+$) is necessarily zero. 
The restriction of $(\,,\,)$ to the Cartan subalgebra $\h$ is non\dash
degenerate, thus $\Cl(\h)$ is a simple algebra or a direct sum of two simple
algebras, much like the bigger algebra $\Cl(\g)$.

In line with all the previous definitions of Harish\dash Chandra
type maps, introduce the Harish\dash Chandra map for $\Cl(\g)$ by  
\begin{equation*}
	\K=\varepsilon_-\tensor \id \tensor \varepsilon_+ \colon
	\Cl(\g) \to \Cl(\h), 
\end{equation*}
where $\varepsilon_\pm \colon \Cl(\n_\pm) = \exterior\n_\pm \to \C$ are
the augmentation maps as above. Similar to the $U(\g)$ situation, the
Harish\dash 
Chandra map $\K$ is not a homomorphism of algebras, but its
restriction to $\h$\dash invariants is: 
\begin{lemma}
\label{lem:homo}
The restriction of $\K$ to the subalgebra $\Cl(\g)^\h=\{u\in \Cl(\g)
\mid \theta(\h)u=0\}$ is a superalgebra homomorphism between $\Cl(\g)^\h$
and $\Cl(\h)$.  
\end{lemma}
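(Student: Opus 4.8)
The plan is to imitate the classical proof that the Harish\dash Chandra map on $U(\g)$ is multiplicative on $\h$\dash invariants (see \cite[7.4.3]{Dix}); the only structural ingredient needed is the weight decomposition of $\Cl(\g)$. First I would observe that the operators $\theta(h)$, $h\in\h$, act on $\exterior\g$ exactly as in the adjoint representation, hence are simultaneously diagonalisable with eigenvalues in the root lattice $Q\subseteq\h^*$, so that $\Cl(\g)=\bigoplus_{\mu\in Q}\Cl(\g)_\mu$. Because $\theta(h)$ is a derivation of the Clifford product (as recorded above), this is an algebra grading, and $\Cl(\g)^\h$ is precisely the zero\dash weight component $\Cl(\g)_0$. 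In this grading $\Cl(\h)\subseteq\Cl(\g)_0$, while $\Cl(\n_+)$ sits in weights from the non\dash negative part $Q^+$ with $\Cl(\n_+)\cap\Cl(\g)_0=\C$, and symmetrically for $\Cl(\n_-)$; all three tensorands of the factorisation $\Cl(\g)=\Cl(\n_-)\tensor\Cl(\h)\tensor\Cl(\n_+)$ are $\theta(\h)$\dash stable.

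Next I would rewrite $\Ker\K$. The factorisation gives $\Cl(\g)=\Cl(\h)\dirsum\Ker\K$; moreover, since the augmentation ideal $\Cl(\n_-)^+=\Ker\varepsilon_-$ of $\Cl(\n_-)=\exterior\n_-$ equals $\n_-\cdot\Cl(\n_-)$ (and likewise on the $+$ side), one gets $\Ker\K=\n_-\Cl(\g)+\Cl(\g)\n_+$. The core of the argument is the identity
$$
\Ker\K\cap\Cl(\g)^\h \;=\; \n_-\Cl(\g)\cap\Cl(\g)^\h \;=\; \Cl(\g)\n_+\cap\Cl(\g)^\h .
$$
One inclusion in each equality is immediate, since $\n_-\Cl(\g),\,\Cl(\g)\n_+\subseteq\Ker\K$ is clear from the triangular form. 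For the reverse, take $w\in\Cl(\g)^\h=\Cl(\g)_0$ and expand it via the factorisation; the weight grading forces $w=\sum_{\mu\in Q^+}w^{(\mu)}$ with $w^{(\mu)}\in\Cl(\n_-)_{-\mu}\,\Cl(\h)\,\Cl(\n_+)_{\mu}$, and then $\K(w)=w^{(0)}$. Hence $w\in\Ker\K$ means $w^{(0)}=0$, so $w=\sum_{\mu\ne0}w^{(\mu)}$; each surviving term lies in $\Cl(\n_-)^+\Cl(\h)\Cl(\n_+)=\n_-\Cl(\g)$ and also in $\Cl(\n_-)\Cl(\h)\Cl(\n_+)^+=\Cl(\g)\n_+$, which yields the displayed equalities.

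The conclusion is then formal. Write $\mathfrak a:=\Ker\K\cap\Cl(\g)^\h$. Since $\n_-\Cl(\g)$ is a right ideal and $\Cl(\g)\n_+$ a left ideal of $\Cl(\g)$, the two descriptions of $\mathfrak a$ give $\mathfrak a\cdot\Cl(\g)^\h\subseteq\n_-\Cl(\g)\cap\Cl(\g)_0=\mathfrak a$ and $\Cl(\g)^\h\cdot\mathfrak a\subseteq\Cl(\g)\n_+\cap\Cl(\g)_0=\mathfrak a$, so $\mathfrak a$ is a two\dash sided ideal of the algebra $\Cl(\g)^\h$. As $\Cl(\g)^\h=\Cl(\h)\dirsum\mathfrak a$ with $\Cl(\h)$ a unital subalgebra meeting $\mathfrak a$ trivially, $\K\colon\Cl(\g)^\h\to\Cl(\h)$ is the composite of the quotient map $\Cl(\g)^\h\to\Cl(\g)^\h/\mathfrak a$ with the isomorphism $\Cl(\g)^\h/\mathfrak a\xrightarrow{\sim}\Cl(\h)$ induced by $\K$, hence a unital algebra homomorphism; and it is parity\dash preserving, because each $\varepsilon_\pm$ annihilates $\Cl(\n_\pm)^\onebar\subseteq\Cl(\n_\pm)^+$, so $\K$ respects the $\zerobar/\onebar$ decomposition. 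Therefore $\K|_{\Cl(\g)^\h}$ is a superalgebra homomorphism. The one point requiring care is the very first step: one must be sure the Clifford relations are weight\dash homogeneous, so that $\Cl(\g)$ genuinely carries a $Q$\dash grading with $\Cl(\g)^\h$ as its zero component. This follows from the $\ad$\dash invariance of the form, which is exactly what makes $\theta(\h)$ act by derivations; after that, everything is the same bookkeeping as in the $U(\g)$ case.
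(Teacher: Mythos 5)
Your argument is correct and is essentially the same as the paper's. The paper writes $L=\n_-\Cl(\g)\n_+$, observes $\Cl(\g)^\h\subseteq\Cl(\h)\dirsum L$ and $L\cdot\Cl(\g),\ \Cl(\g)\cdot L\subseteq\Ker\K$, and concludes; your $\mathfrak a=\Ker\K\cap\Cl(\g)^\h$ is exactly $L\cap\Cl(\g)^\h$ (since $L=\n_-\Cl(\g)\cap\Cl(\g)\n_+$), and your two\dash sided\dash ideal packaging is just a more spelled\dash out version of the same weight\dash decomposition argument.
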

\begin{proof}
First of all, $\K$ is a map of superspaces: the triangular
factorisation $\Cl(\g)=\Cl(\n_-)\tensor$ $\Cl(\h) \tensor$ $\Cl(\n_+)$ is
compatible with the superspace structure on all tensorands, and
moreover, the maps
$\varepsilon_\pm \colon \Cl(\n_\pm) \to \C$ are superspace maps (where
$\C$ is a one\dash dimensional even space), and $\K$ is
defined as $\varepsilon_-\tensor \id \tensor \varepsilon_+$ in this
triangular factorisation. 

Furthermore, write $L=\n_- \Cl(\g)\n_+ \subset \Cl(\g)$. Then 
$\Cl(\g)^\h \subseteq 1\tensor \Cl(\h)
\tensor 1 \dirsum L$. The Lemma follows immediately from the fact that 
$L\cdot \Cl(\g)$, $\Cl(\g) \cdot L$ are in the kernel of~$\K$. 
\end{proof}

\subsection{The r-matrix formula for $\K$}

Let us now express  
the Harish\dash Chandra map $\K\colon \Cl(\g)\to\Cl(\h)$ 
in terms of the projection $\K_0\colon \exterior\g\to\exterior\h$.  
For $i=1,\dots,n$, let 
$x_i$ (respectively $y_i$) be the positive (respectively negative)
root vectors in $\g$, normalized so that $(x_i,y_i)=1$. 
Denote 
$$
    \rmatr = \sum_{i=1}^n x_i \wedge y_i \qquad \in\exterior^2\g.
$$
The formula for $\rmatr$ is a standard way to write a classical
skew\dash symmetric r\dash matrix of $\g$. Now introduce the operator
$$
    \iota(\rmatr)\in \End(\exterior\g), \quad 
    \iota(\rmatr)= \sum_{i=1}^n \iota(x_i) \iota(y_i),
$$
of degree $-2$ with respect to the grading on $\exterior\g$. 
This operator is used in the following
%
\begin{proposition}
\label{prop:rmatr}
Modulo the identification of the spaces $\Cl(\g)$ and $\exterior\g$,
respectively $\Cl(\h)$ and $\exterior\h$,
$$\K(u)=\K_0(e^{\iota(\rmatr)}u)
$$ 
for any $u\in\exterior\g$. 
\end{proposition}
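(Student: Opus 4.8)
The plan is to deduce the identity from two conjugation formulas for the invertible operator $e^{\iota(\rmatr)}\in\End(\exterior\g)$. By the triangular factorisation $\Cl(\g)=\Cl(\n_-)\tensor\Cl(\h)\tensor\Cl(\n_+)$ together with $\Cl(\n_\pm)=\exterior\n_\pm=\C\dirsum\n_\pm\Cl(\n_\pm)$, the Harish\dash Chandra map $\K$ is simply the linear projection of $\Cl(\g)$ onto $\Cl(\h)$ along the complementary subspace $\n_-\cdot\Cl(\g)+\Cl(\g)\cdot\n_+$; likewise $\K_0$ is the projection of $\exterior\g$ onto $\exterior\h$ along $\n_-\wedge\exterior\g+\exterior\g\wedge\n_+$. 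Writing $F=\K_0\circ e^{\iota(\rmatr)}$ and identifying $\Cl(\h)$ with $\exterior\h$ via $\sigma$, it therefore suffices to check that \emph{(i)} $F$ restricts to the identity on $\exterior\h$ and \emph{(ii)} $F$ annihilates $\n_-\cdot\Cl(\g)$ and $\Cl(\g)\cdot\n_+$. Claim \emph{(i)} is easy: since $\n_\pm\perp\h$, each $\iota(y_i)$ kills $\exterior^1\h=\h$, hence --- being a superderivation of $\wedge$ --- kills all of $\exterior\h$; so $\iota(\rmatr)$ and $e^{\iota(\rmatr)}$ act as the identity on $\exterior\h$, on which $\K_0$ is the identity by construction.

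The heart of the matter is \emph{(ii)}. First I would record that $\iota(z)\iota(z')+\iota(z')\iota(z)=0$ and $\iota(z)^2=0$ for $z,z'\in\g$ (the supercommutator is an even superderivation of $\wedge$ that vanishes on $\g$), so the commuting, square\dash zero summands make $e^{\iota(\rmatr)}=\prod_i(1+\iota(x_i)\iota(y_i))$ well defined and invertible, and $[\iota(\rmatr),\iota(z)]=0$ for every $z\in\g$. Next, the superderivation property gives $\iota(z)\circ(w\wedge\cdot)+(w\wedge\cdot)\circ\iota(z)=(z,w)\,\id$ for $z,w\in\g$, and combining this with $(x_i,y_j)=\delta_{ij}$ and the isotropy of $\n_\pm$, a short computation yields
\[
   [\iota(\rmatr),\,y\wedge\cdot]=-\iota(y)\ \ (y\in\n_-),
   \qquad
   [\iota(\rmatr),\,x\wedge\cdot]=\iota(x)\ \ (x\in\n_+).
\]
Under the identification, left Clifford multiplication by $y\in\n_-$ is the operator $\gamma(y)=y\wedge\cdot+\iota(y)$, so $[\iota(\rmatr),\gamma(y)]=-\iota(y)$, which in turn commutes with $\iota(\rmatr)$; hence the adjoint action of $\iota(\rmatr)$ on $\gamma(y)$ is nilpotent of order two and
\[
   e^{\iota(\rmatr)}\circ\gamma(y)\circ e^{-\iota(\rmatr)}=\gamma(y)-\iota(y)=y\wedge\cdot .
\]
For right multiplication, Lemma~\ref{lem:superderiv} lets one identify $\iota(x)$ with the superderivation $u\mapsto\tfrac12\bigl(x\cdot u-(-1)^{|u|}u\cdot x\bigr)$ of the Clifford product, whence right Clifford multiplication by $x$ is $R(x)=P\circ(x\wedge\cdot-\iota(x))$, where $P$ is the parity operator of $\exterior\g$ (which commutes with the even operator $\iota(\rmatr)$); the same computation then gives, for $x\in\n_+$,
\[
   e^{\iota(\rmatr)}\circ R(x)\circ e^{-\iota(\rmatr)}=P\circ(x\wedge\cdot).
\]

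With these formulas in hand, \emph{(ii)} follows at once. For $u\in\exterior\g$ and $y\in\n_-$, under the identification $y\cdot u=\gamma(y)u$, so
\[
   e^{\iota(\rmatr)}(y\cdot u)=e^{\iota(\rmatr)}\gamma(y)(u)=\bigl(e^{\iota(\rmatr)}\gamma(y)e^{-\iota(\rmatr)}\bigr)\bigl(e^{\iota(\rmatr)}u\bigr)=y\wedge\bigl(e^{\iota(\rmatr)}u\bigr)\in\n_-\wedge\exterior\g\subseteq\Ker\K_0,
\]
so $F$ kills $\n_-\cdot\Cl(\g)$; symmetrically $e^{\iota(\rmatr)}(u\cdot x)=P\bigl(x\wedge(e^{\iota(\rmatr)}u)\bigr)$ lies in the graded --- hence $P$\dash stable --- subspace $\exterior\g\wedge\n_+\subseteq\Ker\K_0$, so $F$ kills $\Cl(\g)\cdot\n_+$. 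Together with \emph{(i)} this proves $F=\K$. The main obstacle is spotting the conjugation formulas above: the real content of the proposition is that $\iota(\rmatr)$ conjugates left (respectively right) Clifford multiplication by a root vector into \emph{pure} exterior multiplication, absorbing the contraction term; carrying this out for right multiplication needs only a little care with parity signs but is otherwise routine.
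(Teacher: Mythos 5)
Your proof is correct and takes a genuinely different route from the paper. Both arguments describe $\K$ as the projection onto $\Cl(\h)$ along $\n_-\cdot\Cl(\g)+\Cl(\g)\cdot\n_+$, but the paper then splits $\exterior\g$ into $\h$\dash weight spaces: it verifies the identity on $\exterior\h$, on $\n_-\cdot\Cl(\g)$ by an ad hoc computation with $\prod_i(1+\iota(x_i)\iota(y_i))$ acting on $\gamma(y_1)u''$, and disposes of the nonzero\dash weight part by observing that both $\K$ and $\K_0\circ e^{\iota(\rmatr)}$ are $\h$\dash equivariant and land in the weight\dash zero space $\exterior\h$. You instead prove the two conjugation identities $e^{\iota(\rmatr)}\gamma(y)e^{-\iota(\rmatr)}=y\wedge\cdot$ for $y\in\n_-$ and $e^{\iota(\rmatr)}R(x)e^{-\iota(\rmatr)}=\pm P\circ(x\wedge\cdot)$ for $x\in\n_+$, treating left and right multiplication symmetrically and eliminating the weight argument altogether; this is more conceptual and arguably cleaner, since the entire proposition becomes the single observation that $e^{\iota(\rmatr)}$ straightens Clifford multiplication by root vectors into pure exterior multiplication. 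The one slip is a sign/order error in your formula for right multiplication: from $u\cdot x=(-1)^{|u|}(x\wedge u-\iota(x)u)$ one gets $R(x)=(x\wedge\cdot-\iota(x))\circ P$, not $P\circ(x\wedge\cdot-\iota(x))$ (the two differ by the sign $(-1)$, since the middle operator is odd). This does not affect the conclusion, because either way the conjugate equals $\pm(x\wedge\cdot)\circ P$ and the image still lands in the $P$\dash stable graded subspace $\exterior\g\wedge\n_+\subseteq\Ker\K_0$.
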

\begin{proof}
The algebra $\exterior \h$ is viewed as an exterior and Clifford
subalgebra of $\exterior \g$. For any $u\in \exterior\h$ one has 
$\K(u)=\K_0(u)=u$, and $\iota(\rmatr)u=0$ so that
$e^{\iota(\rmatr)}u=u$. Thus, both sides of the equation agree on
$u\in \Cl(\h) \subset \Cl(\g)$.

Observe, as in the proof of Lemma~\ref{lem:homo}, that 
$\Cl(\g)^\h\subset \Cl(\h)+ \n_-\cdot \Cl(\g)$. Let us show that
$\K(u)=\K_0(e^{\iota(\rmatr)}u)=0$ when $u\in \n_-\cdot \Cl(\g)$. 
Of course, $\K(u)=0$ simply by definition of the map $\K$. 
Now, we may assume that $u\in y_j \cdot \Cl(\g)$ for some $j$ between
$1$ and $n$; but since $\iota(\rmatr)$ does not depend on a particular
ordering of positive roots, we may assume $j$ to be $1$, i.e.,
$u=\gamma(y_1)u'$ for some $u'\in\exterior\g$. Here
$\gamma(y)=y\wedge\cdot + \iota(y)$ is the
operator of left Clifford multiplication by $y$ as in
\ref{subsect:sigma}.   

Note that the operators $\iota(x_i)\iota(y_i)$, $i=1,\dots,n$, 
pairwise commute and square
to zero. This follows from the fact that $\iota(x)$ and $\iota(y)$
anticommute for all $x,y\in \g$, see the proof of
Lemma~\ref{lem:superderiv}. 
Hence, we may write $e^{\iota(\rmatr)}$ as $\prod_{i=1}^n
e^{\iota(x_i)\iota(y_i)} =
\prod_{i=1}^n(1+\iota(x_i)\iota(y_i))$. Furthermore, because
$(x_i,y_1)=(y_i,y_1)=0$ for $i\ne 1$, it follows from
Lemma~\ref{lem:superderiv} that $\iota(x_i)$ commutes with
$\gamma(y_1)$ for $i\ne 1$. Thus, 
$$
\K_0(e^{\iota(\rmatr)} u) = \K_0 \bigl( 
(1+\iota(x_1)\iota(y_1))\gamma(y_1) u''\bigr)
$$
for some $u''\in\exterior\g$. It remains to note that 
$$
  (1+\iota(x_1)\iota(y_1))z = \gamma(x_1)\gamma(y_1)z-x_1\wedge
  y_1\wedge z 
$$
and that, obviously, $\K_0(x_1\wedge y_1\wedge z)=0$ for all
$z\in\exterior\g$. We are left with 
$$
\K_0(e^{\iota(\rmatr)} u) = \K_0 ( \gamma(x_1)\gamma(y_1)^2 u'') 
$$ 
which is zero since $y_1^2=(y_1,y_1)=0$ in the Clifford algebra $\Cl(\g)$. 

We have shown that both sides of the required equation agree when
$u\in\Cl(g)^\h$. Now suppose that $u$ is an eigenvector of non\dash zero weight for
the adjoint action of $\h$ on $\exterior\g$.  Since the maps $\K$,
$\K_0$ and $\iota(\rmatr)$ preserve the weight with respect
to the $\h$\dash action, $\K(u)$ and
$\K_0(e^{\iota(\rmatr)}u)$ must have, in $\exterior \h$, the same
non\dash zero weight as $u$. Since the adjoint action of $\h$ on
$\exterior\h$ is trivial, the latter is only possible if
$\K(u)=\K_0(e^{\iota(\rmatr)}u)=0$. The Proposition is proved.
\end{proof}

\begin{remark}
The map $p_G^T\circ \mathcal{T}$ in \cite[3.1]{AMW} coincides with 
$\K_0\circ e^{\iota(\rmatr)}$ (in our notation). 
Proposition~\ref{prop:rmatr} thus implies 
that the map $p_G^T\circ \mathcal{T}$ from \cite{AMW} is the same as our Harish\dash Chandra
map $\K$.  
\end{remark}

\begin{remark}
\label{rem:Khbar}
Recall that for each non\dash zero value of the deformation parameter $\hbar$
we can equip $\exterior\g$ with the structure of Clifford algebra
$\Cl_\hbar(\g)$. The latter Clifford algebra is built with respect to the
bilinear form $(\,,\,)_\hbar = \hbar\cdot (\,,\,)$ on $\g$ which is $\ad$\dash
invariant and non\dash degenerate. In particular, the Harish\dash Chandra map 
$$
\K_\hbar\colon \Cl_\hbar(\g)\to\Cl_\hbar(\h)
$$
is defined. Let us apply Proposition~\ref{prop:rmatr} to the
Clifford algebra $\Cl_\hbar(\g)$. 

Namely, if $x_i$, $y_i$ was a
positive/negative root vector  pair normalised by $(x_i,y_i)=1$ , 
then $x_i$, $\hbar^{-1} y_i$ will be such pair for the form
$ (\,,\,)_\hbar$. It follows that the classical r\dash matrix of $\g$
corresponding to the new form $(\,,\,)_\hbar$ is given by
$\rmatr_\hbar = \hbar^{-1}\rmatr$. Furthermore, the contraction operators on
$\exterior\g$ with respect to the bilinear form $(\,,\,)_\hbar$ are
given by $\iota_\hbar(x)=\hbar\,\iota(x)$. Hence we have a new operator 
$$
    \iota_\hbar(\rmatr_\hbar) := \sum_{i=1}^n
    \iota_\hbar(x_i)\iota_\hbar(\hbar^{-1}y_i) = \hbar\, \iota(\rmatr),
$$
which leads to the following corollary of Proposition~\ref{prop:rmatr}:
\begin{corollary}
\label{cor:rmatr}
$   \K_\hbar(u) = \K_0(e^{\hbar \iota(\rmatr)}u) = 
   \K_0(u) + \hbar\, \K_0(\iota(\rmatr) u) + 
    \frac{\hbar^2}{2!} \K_0(\iota(\rmatr)^2 u) + \dots
$.
\qed
\end{corollary}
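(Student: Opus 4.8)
The plan is to derive the Corollary directly from Proposition~\ref{prop:rmatr}, applied not to $\Cl(\g)$ itself but to the rescaled Clifford algebra $\Cl_\hbar(\g)$ for a fixed $\hbar\ne 0$. First I would check that Proposition~\ref{prop:rmatr} does apply in this setting: the form $(\,,\,)_\hbar=\hbar\,(\,,\,)$ is again $\ad$\dash invariant and non\dash degenerate on $\g$, it restricts to zero on $\n_\pm$ and to a non\dash degenerate form on $\h$, so $\Cl_\hbar(\g)=\Cl_\hbar(\n_-)\tensor\Cl_\hbar(\h)\tensor\Cl_\hbar(\n_+)$ and the map $\K_\hbar$ is defined exactly as $\K$ was. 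The key point is that the ambient graded space is still $\exterior\g$ and the projection $\K_0\colon\exterior\g\to\exterior\h$ does not refer to the bilinear form at all, so the same $\K_0$ occurs for every value of $\hbar$.

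Next I would track how the two $\hbar$\dash dependent ingredients of Proposition~\ref{prop:rmatr} scale. If $x_i,y_i$ is a positive/negative root vector pair with $(x_i,y_i)=1$, then $x_i,\hbar^{-1}y_i$ is such a pair for the form $(\,,\,)_\hbar$, so the classical r\dash matrix attached to $(\,,\,)_\hbar$ is $\rmatr_\hbar=\hbar^{-1}\rmatr$; and from $\iota(x)y=(x,y)$ one gets $\iota_\hbar(x)=\hbar\,\iota(x)$. Hence the operator of degree $-2$ entering Proposition~\ref{prop:rmatr} for $\Cl_\hbar(\g)$ is
$$
\iota_\hbar(\rmatr_\hbar)=\sum_{i=1}^n \iota_\hbar(x_i)\,\iota_\hbar(\hbar^{-1}y_i)=\sum_{i=1}^n\bigl(\hbar\,\iota(x_i)\bigr)\iota(y_i)=\hbar\,\iota(\rmatr),
$$
and Proposition~\ref{prop:rmatr} yields $\K_\hbar(u)=\K_0(e^{\hbar\iota(\rmatr)}u)$ for all $u\in\exterior\g$.

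Finally I would expand the exponential: since $\iota(\rmatr)$ lowers exterior degree by $2$ and $\exterior\g$ is finite\dash dimensional, $e^{\hbar\iota(\rmatr)}=\sum_{k\ge 0}\frac{\hbar^k}{k!}\iota(\rmatr)^k$ is a finite sum, and applying the linear map $\K_0$ termwise gives the stated formula. There is no real obstacle here: the only thing that needs care is the bookkeeping of the cancelling powers of $\hbar$ in $\rmatr_\hbar=\hbar^{-1}\rmatr$ against $\iota_\hbar=\hbar\,\iota$, which is exactly the computation recorded in Remark~\ref{rem:Khbar}, so the proof amounts to making that remark precise.
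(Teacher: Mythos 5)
Your proposal is correct and follows exactly the paper's route: the paper derives the Corollary in Remark~\ref{rem:Khbar} by applying Proposition~\ref{prop:rmatr} to $\Cl_\hbar(\g)$ and observing that $\rmatr_\hbar=\hbar^{-1}\rmatr$ and $\iota_\hbar(x)=\hbar\,\iota(x)$ combine to give $\iota_\hbar(\rmatr_\hbar)=\hbar\,\iota(\rmatr)$, after which the exponential is expanded termwise. The only thing you add is the (correct but routine) verification that Proposition~\ref{prop:rmatr} really does apply for $(\,,\,)_\hbar$, which the paper leaves implicit.
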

The map $\K_\hbar$ is thus given  as a deformation of the Chevalley
projection $\K_0$. We emphasise that $\K_\hbar$ is injective on the
space $\J$ of invariants for
$\hbar\ne 0$ while $\K_0=\K_\hbar|_{\hbar^{\vphantom{H^2}}=0}$ is not.
\end{remark}

%
%
%
%

\section{The algebras of invariants
and Kostant's $\rho$-decomposition of $\Cl(\g)$}

This section contains the information on the
structure of the algebras $\J_S=\Sym(\g)^\g$,
$\J=\Cl(\g)^\g$ and $\Cl(\g)$ 
which will be used in the proof of our main results. 
We will recall several theorems of Kostant from \cite{Ko} where it is
assumed that $(\,,\,)$ is the Killing form on $\g$; however, they are
easily seen to hold when $(\,,\,)$ is any non\dash degenerate 
$\ad$\dash invariant form.

\subsection{Generators of the algebra of symmetric invariants}
\label{chevalley_generators}

Recall from Section~\ref{sect:firstsect} that the Chevalley projection map 
establishes an algebra homomorphism between $\J_S=\Sym(\g)^\g$ and the algebra 
$\Sym(\h)^W$ of Weyl group invariants in the polynomial algebra $\Sym(\h)$.
From the invariant theory of reflection groups \cite{ST,C} 
it follows that $\J_S$ has $\rank=\mathit{rank}(\g)$
algebraically independent homogeneous generators
$f_1,f_2,\dots,f_\rank$. These are defined 
up to multiplication  by non\dash zero constants and modulo $(\J_S^+)^2$,
where $\J_S^+=\J_S\cap \oplus_{n>0} S^n(\g)$.   
We will denote by $P_S$ the linear span of some chosen
$f_1,f_2,\dots,f_\rank$. The space $P_S$ is, in general, 
not uniquely defined and depends on the choice of the $f_i$.


Define the positive integers $m_1,\dots,m_\rank$ by $\deg f_i = m_i+1$.
The numbers $m_i$ are independent (up to reordering) of the choice   
of a particular set of the $f_i$ and are called the exponents of
$\g$. 

\subsection{Primitive skew\dash symmetric invariants}

It turns out that there is a skew  symmetric counterpart, $P$, of
the space $P_S\subset J_S$. 
First of all, extend the $\ad$\dash invariant form $(\,,\,)$ from $\g$ to the whole
of $\exterior \g$ in a standard way: $\exterior ^m \g$ is orthogonal
to $\exterior^n \g$ unless $m=n$, and  
$$
   (x_1\wedge \dots \wedge x_n, y_1 \wedge \dots \wedge y_n)=
    \det((x_i,y_j))_{i,j=1}^n
$$
where $x_i,y_i\in \g$. 
The space $P\subset \exterior\g$ of primitive alternating invariants 
is defined as the $(\,,\,)$\dash orthocomplement of $\J^+\wedge
\J^+$ in $\J^+$, where $\J^+$ 
is the augmentation ideal $\J\cap
(\sum_{m>0}\exterior^m\g)$. By a theorem of Koszul (see \cite[Theorem
26]{Ko}), the restriction of  $(\,,\,)$  to $\J$ and to $\J^+$ is
non\dash singular. Hence $P$ is a graded subspace of $\J$. 
Moreover, the dimension of $P$ is equal to the rank $\rank$ of
$\g$, and one actually knows the degrees where the graded components
of $P$ are located:
$$
P = \mathrm{span}\,\{p_1,p_2,\dots,p_\rank\},
\qquad
p_i\in (\exterior^{2m_i+1}\g)^\g,
$$
where $m_i$, as before, are the exponents of $\g$.

Under a natural bijection between $\J$ and the homology $H_*(\g)$ of
$\g$,  the elements of $P$ correspond to what is known as
primitive homology classes (see \cite[4.3]{Ko}).

%
%

Now it turns out that the space $\J$ 
of invariants, with the product induced from
$\exterior\g$, is itself an exterior algebra. 
The Hopf-Koszul\dash Samelson theorem (see \cite[4.3]{Ko} which refers
to \cite[Theorem 10.2]{Kz}) 
asserts that 
$$
\J=\exterior P,
$$
meaning that the map $\zeta\colon \exterior P\to \J$, 
which is an algebra homomorphism extending
the inclusion map $P\hookrightarrow \J$ (the elements of
$P$ anticommute in $J$, being of odd degree), is an
isomorphism.  

\subsection{The subalgebra $\J\subset \Cl(\g)$}
\label{clif_p}

Even more surprising is the result, due to Kostant, 
that the subalgebra $\J = \Cl(\g)^\g$ 
of the Clifford algebra $\Cl(\g)$, is itself a Clifford algebra,
generated by $P$ as primitive generators. 


Of course, the assertion $\J=\Cl(P)$ has a chance to be valid 
only if a primitive tensor $p\in P$, being
Clifford\dash squared, yields a constant. And this is indeed true; 
as shown in \cite[Theorem B]{Ko}, primitive skew\dash symmetric invariants
behave under Clifford multiplication as if they were elements of
degree $1$:
$$
               p\cdot p = (\alpha(p),p) \qquad\qquad
\text{for $p\in P$,}
$$
where $\alpha$ is defined as multiplication by the constant $(-1)^m$ 
on $P\cap \exterior^{2m+1}\g$. 
A further result of Kostant \cite[Theorem 35]{Ko} asserts that the 
 map $\zeta_{\Cl}\colon \Cl(P)\to \J$, 
which extends, as an algebra homomorphism, the inclusion map 
$P\hookrightarrow \J$, is an isomorphism of 
algebras. Here $\Cl(P)$ is the Clifford algebra of the space $P$
equipped with the non\dash degenerate bilinear form
$(p,q)_0=(\alpha(p),q)$. 
For later use, we record this here as theorem.  
\begin{theorem}[Kostant]
\label{thm:kostant}
In the above notation, 
the diagram
$$
\begin{CD}
\exterior P & @>\zeta>{}^\sim> & J \\
@V{\beta_{\wedge,P}}VV & & @VV{\beta_{\wedge,\g}}V \\
\Cl(P) & @>\zeta_{\Cl}>{}^\sim> & J
\end{CD}
$$
commutes. Here $\beta_{\wedge,P}\colon \exterior P \to \Cl(P)$
and $\beta_{\wedge,\g}\colon \exterior\g\to \Cl(\g)$ are skew\dash
symmetrisation maps for the respective exterior algebras. 
\end{theorem}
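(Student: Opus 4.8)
The plan is to establish the commutativity of the diagram by exploiting the fact that all four maps are compatible with the natural filtration on $\Cl(\g)$ (and $\Cl(P)$) whose associated graded is the exterior algebra. First I would recall that the skew-symmetrisation map $\beta_{\wedge,\g}\colon\exterior\g\to\Cl(\g)$ is a $\g$-module isomorphism, hence restricts to a linear isomorphism $\J=(\exterior\g)^\g\xrightarrow{\sim}(\Cl(\g))^\g=\J$; likewise $\beta_{\wedge,P}$ is a linear isomorphism $\exterior P\to\Cl(P)$. The key point is that $\beta_{\wedge,\g}$, restricted to $\J$, \emph{preserves the top filtration degree}: for a homogeneous element $u\in\exterior^n P\subset\exterior^n\g$, the leading term of $\beta_{\wedge,\g}(u)$ in the exterior-algebra grading is $u$ itself, with lower-order (i.e. lower exterior-degree, by parity differing by multiples of $2$) corrections governed by Lemma~\ref{lem:taylor}.

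The core of the argument is then a degree-by-degree comparison. Both $\zeta$ and $\zeta_{\Cl}$ are algebra homomorphisms extending the identity on $P$, so the two composites $\beta_{\wedge,\g}\circ\zeta$ and $\zeta_{\Cl}\circ\beta_{\wedge,P}$ from $\exterior P$ to $\J\subset\Cl(\g)$ agree on $\exterior^1 P=P$. To see they agree on all of $\exterior P$, I would argue by induction on exterior degree: given $p_{i_1}\wedge\dots\wedge p_{i_k}\in\exterior^k P$, its image under $\beta_{\wedge,P}$ is $\frac{1}{k!}\sum_\pi(\sgn\pi)\,p_{i_{\pi(1)}}\cdots p_{i_{\pi(k)}}$ (Clifford product in $\Cl(P)$), and applying $\zeta_{\Cl}$ just reinterprets each $p_i$ inside $\J$; one must match this with $\beta_{\wedge,\g}$ applied to $\zeta(p_{i_1}\wedge\dots\wedge p_{i_k})=p_{i_1}\wedge\dots\wedge p_{i_k}$ viewed in $\exterior\g$. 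Here the crucial input from Theorem~B of Kostant (already quoted: $p\cdot p=(\alpha(p),p)$ and, more generally, that distinct primitive generators Clifford-anticommute up to the constant given by the form $(\,,\,)_0$) guarantees that the Clifford product of the $p_i$ inside $\Cl(\g)$ obeys exactly the Clifford relations of $\Cl(P)$; combining this with the fact that $\beta_{\wedge,\g}$ commutes with skew-symmetrisation lets one identify the two expressions term by term.

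Concretely, I expect the cleanest route is the following. By Kostant's Theorem~35, $\zeta_{\Cl}$ is an algebra isomorphism, so it suffices to show $\zeta_{\Cl}^{-1}\circ\beta_{\wedge,\g}\circ\zeta=\beta_{\wedge,P}$ as maps $\exterior P\to\Cl(P)$. Both sides are linear, both send $1\mapsto1$ and restrict to the identity on $P$; I claim both are equal to the canonical skew-symmetrisation. The left side, being $\zeta_{\Cl}^{-1}$ (an algebra map) composed with a map that is the identity plus strictly-lower-exterior-degree terms, is itself the identity plus strictly-lower-degree terms on the filtration; the same is true of $\beta_{\wedge,P}$. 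One then shows that \emph{any} linear map $\exterior P\to\Cl(P)$ which (a) is filtered with identity leading term and (b) carries the wedge product of generators to a totally antisymmetric expression in their Clifford product must coincide with $\beta_{\wedge,P}$ — this is a formal lemma about Clifford algebras of odd-degree-like generators. Property (b) for the left side is where Kostant's Theorem~B enters: it forces the Clifford product of the primitive invariants in $\J$ to be the Clifford product dictated by the form $(p,q)_0=(\alpha(p),q)$.

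The main obstacle, I expect, is the bookkeeping in step (b): verifying that $\beta_{\wedge,\g}$ of a wedge $p_{i_1}\wedge\dots\wedge p_{i_k}$ of primitive invariants, when expanded in $\Cl(\g)$, really does equal the totally antisymmetrised Clifford product, rather than picking up spurious correction terms from Lemma~\ref{lem:taylor} that fail to cancel. The resolution should be that $\beta_{\wedge,\g}$ is natural with respect to algebra maps of the \emph{generating} space — more precisely, skew-symmetrisation is functorial, so $\beta_{\wedge,\g}$ applied to an element of $\exterior P\subset\exterior\g$ lands in $\Cl(P)\subset\Cl(\g)$ and there agrees with $\beta_{\wedge,P}$ — and this functoriality is exactly the content that needs to be checked carefully, using that $P$ generates a Clifford \emph{sub}algebra of $\Cl(\g)$ (Kostant's Theorem B guarantees $P$ is a ``Clifford subspace''). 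Once functoriality of skew-symmetrisation under the inclusion of a Clifford subspace is in hand, the diagram commutes essentially tautologically.
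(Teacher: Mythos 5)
The paper itself does not give a proof of Theorem~\ref{thm:kostant}: it records the statement and attributes it directly to Kostant's work \cite[Theorem~35]{Ko} (and the surrounding results there). So there is no paper-internal argument to compare yours against; what I can do is assess your proposal on its own merits.

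Your proposal correctly isolates the crux: one must show that $\beta_{\wedge,\g}$ applied to a wedge $p_{i_1}\wedge\dots\wedge p_{i_k}$ of primitive invariants, when viewed inside $\Cl(\g)$ via $\sigma^{-1}$, equals the antisymmetrised Clifford product $\frac{1}{k!}\sum_\pi(\sgn\pi)\,p_{i_{\pi(1)}}\cdots p_{i_{\pi(k)}}$ computed in $\Cl(\g)$. But the ``resolution'' you offer — that skew-symmetrisation is functorial under the inclusion of a Clifford subspace, so that $\beta_{\wedge,\g}|_{\exterior P}$ automatically coincides with $\beta_{\wedge,P}$ — is not a formal lemma; it is essentially a restatement of the theorem, and the inputs you cite do not establish it. Kostant's Theorem~B (constant Clifford squares) together with Theorem~35 (algebra isomorphism $\Cl(P)\cong \J$) tell you the algebra structure of $\J$ under Clifford multiplication, i.e.\ that the $p_i$ pairwise anticommute up to constants. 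They do \emph{not} tell you that the element $\sigma(p_{i_1}\cdots p_{i_k})\in\exterior\g$ has no lower-degree pieces beyond the leading term $p_{i_1}\wedge\dots\wedge p_{i_k}$.

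To make the gap concrete: take two orthogonal primitives $p,q\in P$ of degrees $2a+1$, $2b+1$. By Lemma~\ref{lem:taylor}, $\sigma(p\cdot q)=p\wedge q+u_{2a+2b}+u_{2a+2b-2}+\dots$ and $\sigma(q\cdot p)=q\wedge p+v_{2a+2b}+\dots$ with $q\wedge p=-p\wedge q$. Knowing $p\cdot q+q\cdot p=0$ (from Theorem~35/the form $(\,,\,)_0$) forces $u_m+v_m=0$ for every $m$; but the diagram commutativity at $p\wedge q$ requires $u_m-v_m=0$ for $m<2a+2b+2$. These are logically independent conditions; together they say $u_m=v_m=0$, which is strictly more than the algebra isomorphism gives. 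This extra vanishing is precisely what Kostant proves (his transgression analysis, the $\rho$-decomposition and the $\iota(x)P\subset E$ result are all involved) and what your ``functoriality'' would need to supply. As written, the proposed proof assumes at the decisive step what is to be proved.
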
 

\subsection{The Chevalley transgression map}
\label{subsect:t}

%
We briefly recall the useful transgression map, following
\cite[Section 6]{Ko}.
%
%
Let 
$$
    d\colon \g\to\exterior^2\g, \qquad dx = \frac{1}{2}\sum e_a\wedge [e^a,x]
$$
be the coboundary map for $\g$. (Its extension to $\exterior \g$ as a
derivation of degree $1$ is the coboundary in the standard Koszul
complex for $\g$.) 
Here $\{e_a\}$, $\{e^a\}$ is any pair of dual bases of $\g$ with respect
to the form $(\,,\,)$. 
Now introduce the algebra homomorphism
$$
    s\colon S(\g) \to \exterior^{\mathit{even}} \g, 
    \qquad
    s(x_1x_2\dots x_n) = dx_1 \wedge dx_2 \wedge \dots \wedge dx_n
$$
between $S(\g)$ and the commutative subalgebra
$\exterior^{\mathit{even}} \g = \sum_n\exterior^{2n} \g$ of $\exterior \g$. 

Denote by $\iota_S(x)f$ the directional derivative of $f\in S(\g)$ with
respect to $x\in\g$ (attention: $\g$ is identified with its dual space
$\g^*$ via the form $(\,,\,)$). In other words, 
$\iota_S(x)$ is the derivation of $S(\g)$ of degree $-1$ such that 
for $y$ in $S^1(\g)=\g$, one has 
$\iota_S(x)y=(x,y)\in S^0(\g)$.
The Chevalley transgression map may now be defined by the formula
$$
t(f) = \frac{(m!)^2}{(2m+1)!} \sum_a e_a\wedge s(\iota_S(e^a)f)
$$
due to Kostant \cite[Theorem 64]{Ko}.
This maps symmetric tensors of degree $m+1$ to alternating tensors of
degree $2m+1$.
By a result of Chevalley (see \cite[Theorem 66]{Ko}), 
for any choice of the space $P_S$ of
primitive symmetric invariants, $t\colon P_S\to P$ is a linear
isomorphism. (One observes that $t$ vanishes on $(J_S^+)^2$.)
One can choose $f_i\in P_S\cap S^{m_i+1}(\g)$
so that $t(f_i)=p_i$.

\subsection{The $\rho$-decomposition of $\Cl(\g)$}

Kostant's ``separation of variables'' result for the Clifford algebra 
\cite{Ko} asserts
that $\Cl(\g)$ is a free module over its
subalgebra $\J$.   
There is a subalgebra $E\subset\Cl(\g)$, which is in fact the Clifford 
centraliser of $\J$ in  $\Cl(\g)$, so that the Clifford algebra
factorises as 
$$
\Cl(\g) = E \tensor \J,
$$
where $\tensor$ is realised by Clifford multiplication. 
Moreover, $E$ can also be described as follows.
For $x\in \g$, denote 
$$
\delta(x)=\frac{1}{4} \sum_a e_a\cdot [e^a, x]\in \Cl(\g),
$$
where, as usual, $\{e_a\}$, $\{e^a\}$ are a pair of dual bases of
$\g$. It is easy to show that in fact, identifying the spaces
$\exterior\g$ and $\Cl(\g)$, one has 
$\delta(x)=\frac{1}{2}dx$ where 
$dx\in\exterior^2\g$ is the coboundary of $x$ introduced earlier. 
The equation 
$\delta([x,y])=\delta(x)\delta(y)-\delta(y)\delta(x)$ holds in the
Clifford algebra $\Cl(\g)$; see \cite[Proposition 28]{Ko}. 
Therefore, $\delta$ extends to a homomorphism 
$$
      \delta\colon U(\g) \to \Cl(\g)
$$
of associative algebras. One has $E= \delta(U(\g))$.
%
%
%

Now, let $\rho$ be the half sum of positive roots of $\g$, and let
$V_\rho$ denote the irreducible $\g$\dash module with highest
weight $\rho$. Kostant identifies $E$, as an algebra and a $\g$\dash
module, with the matrix algebra $\End V_\rho$, which is why the
factorisation $\Cl(\g)=E\tensor \J$ is referred to as the
$\rho$\dash decomposition of $\Cl(\g)$. Indeed, 
let $x_1,\dots,x_n$ be an ordering of positive root vectors in $\g$,
and let 
$\mu_+=x_1\cdot\dots\cdot x_n$ in $\Cl(\g)$.
Let $z\in U(\g)$ act on the subspace $E\mu_+$ of $\Cl(\g)$ via left
multiplication by $\delta(z)$. This makes $E\mu_+$ into a $U(\g)$\dash
module isomorphic to $V_\rho$, with highest weight vector $\mu_+$. 
The action of $E$ on $V_\rho\cong E\mu_+$ by left multiplication
induces a homomorphism $E\to \End V_\rho$. 
It turns out to be an isomorphism \cite[Theorem 40]{Ko}. 
 
%


\section{$\K$ is an isomorphism between $\Cl(\g)^\g$ and $\Cl(\h)$}
\label{sect:iso}

\subsection{The first main theorem}

In this section we prove our first main result about the Clifford
algebra analogue, 
$\K\colon \Cl(\g)\to \Cl(\h)$, of the Harish\dash Chandra map.
We will use the notation introduced in previous sections. 

\begin{theorem}
\label{thm:main1}
The restriction of the Harish\dash Chandra map $\K$ to the $\g$\dash
invariants in $\Cl(\g)$ is a 
superalgebra isomorphism $\K\colon \Cl(\g)^\g \to \Cl(\h)$.
\end{theorem}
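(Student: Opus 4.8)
\emph{Proof proposal.} The plan is to get the isomorphism by an abstract structural argument rather than by an explicit computation. By Lemma~\ref{lem:homo}, and since $\J=\Cl(\g)^\g\subseteq\Cl(\g)^\h$, the restriction $\K|_\J\colon\J\to\Cl(\h)$ is a homomorphism of superalgebras; it is unital, because $\varepsilon_\pm(1)=1$ gives $\K(1)=1$. By Kostant's identification $\zeta_{\Cl}\colon\Cl(P)\xrightarrow{\sim}\J$ together with $\dim_\C P=\rank=\dim_\C\h$, we have $\dim_\C\J=2^{\rank}=\dim_\C\Cl(\h)$. A bijective homomorphism of superalgebras is an isomorphism, and a linear injection between finite-dimensional spaces of equal dimension is automatically a bijection; so the whole statement reduces to proving that $\K|_\J$ is \emph{injective}.

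To prove injectivity, I would argue that $\Ker(\K|_\J)$ is a two-sided ideal of $\J$, hence, via $\zeta_{\Cl}$, a two-sided ideal of the Clifford algebra of the nondegenerate complex quadratic space $(P,(\,,\,)_0)$ of dimension $\rank$. Over $\C$ this algebra is semisimple: a single full matrix algebra when $\rank$ is even, and a product of two full matrix algebras when $\rank$ is odd. If $\rank$ is even its only two-sided ideals are $0$ and $\Cl(P)$; the latter is excluded because $\K(1)=1\ne 0$, so the kernel is $0$ and we are done in this case.

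When $\rank$ is odd there are two more two-sided ideals, $\Cl(P)e_+$ and $\Cl(P)e_-$, with $e_\pm=\tfrac12(1\pm\omega)$, where $\omega$ is the volume element: choosing an orthogonal basis $p_1,\dots,p_\rank$ of $(P,(\,,\,)_0)$ — possible, since this form is block-diagonal along the decomposition of $P$ by exterior degree and nondegenerate on each block — one takes $\omega$ to be the scalar multiple of the Clifford product $p_1\cdot\dots\cdot p_\rank$ normalised so that $\omega^2=1$; this $\omega$ is central and spans the centre of $\Cl(P)$ together with $1$. If $\Ker(\K|_\J)$ were $\Cl(P)e_+$, then $\K(e_+)=0$ would force $\K(\omega)=-\K(1)=-1$. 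But $\omega$ is a product of $\rank$ elements of odd degree with $\rank$ odd, hence is an odd element; since $\K$ preserves parity, $\K(\omega)$ lies in $\Cl^{\onebar}(\h)$ and cannot equal $-1\in\Cl^{\zerobar}(\h)$. The same argument excludes $\Cl(P)e_-$, so $\Ker(\K|_\J)=0$ and the proof is complete.

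I expect the odd-rank case to be the only genuine obstacle: the bare "homomorphism out of a semisimple algebra with nonzero unit" argument does not by itself eliminate the kernel there, and it is precisely the $\mathbb{Z}/2$-grading — concretely, the parity of the volume element — that supplies the missing ingredient. (One could instead try to prove surjectivity directly via Corollary~\ref{cor:rmatr}, which expresses $\K(p_i)$ as $\tfrac1{m_i!}\K_0(\iota(\rmatr)^{m_i}p_i)\in\h$ plus components of exterior degree at least $3$ in $\exterior\h$; but turning that into a generation statement for $\Cl(\h)$ looks more delicate than the structural argument above, so I would not pursue it.)
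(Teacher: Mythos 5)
Your proof is correct, and it takes a genuinely different route from the paper. The paper proves \emph{surjectivity} directly: it invokes Kostant's $\rho$\dash decomposition $\Cl(\g)=E\tensor\J$, shows via an explicit computation (Lemma~\ref{lem:calc}) that $\K(\delta(h))=\rho(h)$ for $h\in\h$ and $\delta(\n_+)\subset\Cl(\g)\n_+$, deduces $\K(E^\h)=\C$, and then concludes $\Cl(\h)=\K(E^\h)\K(\J)=\K(\J)$; injectivity is the afterthought, by the dimension count $2^\rank$. You instead prove \emph{injectivity} directly, using the other of Kostant's two structural facts, namely $\J\cong\Cl(P)$ with $(P,(\,,\,)_0)$ nondegenerate of dimension $\rank$: since $\Ker(\K|_\J)$ is a two-sided ideal of a complex Clifford algebra, it is cut out by central idempotents, and in the only nontrivial case ($\rank$ odd) those idempotents are $\tfrac12(1\pm\omega)$ with $\omega$ an odd element (a product of $\rank$ elements of $P\subset\Cl^\onebar(\g)$); parity preservation of $\K$, which you correctly extract from Lemma~\ref{lem:homo}, then rules out $\K(\omega)=\mp1$. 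Both proofs lean on nontrivial results of Kostant, just different ones. A small trade\dash off: the paper's explicit calculation is not wasted effort, since it also yields the formula $\K_\hbar\circ\delta=\mathrm{ev}_{\hbar\rho}\circ\Psi\circ(\cdot)$ (Lemma~\ref{lem:composition}), which is indispensable for Theorem~\ref{thm:main2}; your argument, being purely structural, would not produce that by\dash product. Also note you do not actually need to choose the orthogonal $p_i$ to be $\mathbb Z$\dash homogeneous for the parity step: every element of $P$ already lies in $\Cl^\onebar(\g)$ because $P\subset\sum_m\exterior^{2m+1}\g$. Your concluding remark is well judged: attempting surjectivity via Corollary~\ref{cor:rmatr} and the degree\dash one component of $\K(p_i)$ essentially amounts to Proposition~\ref{prop:bij}, whose proof in the paper uses Theorem~\ref{thm:main1}, so that route would risk circularity.
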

\begin{proof}
We use Kostant's $\rho$\dash decomposition, $\Cl(\g)=E \tensor
\J$, of the Clifford algebra $\Cl(\g)$.
Let us denote by $E^\h$ the subalgebra $\{u\in E \mid \theta(\h)u=0\}$
of $\Cl(\g)^\h$. Then we have the tensor factorisation 
$$
    \Cl(\g)^\h = E^\h \tensor \J. 
$$
Recall from the previous Section that $E$ is the image of $U(\g)$
under the algebra map $\delta\colon U(\g)\to \Cl(\g)$. 
As $\delta$ is also a $\g$\dash module map, $E^\h =\delta(U(\g)^\h)$,
where $U(\g)^\h$ is $\ad \h$\dash invariants in $U(\g)$. 
Choose a basis of $U(\g)^\h$ consisting of monomials 
$y_{a_1}\dots y_{a_k}h_{b_1}\dots h_{b_l} x_{c_1}\dots x_{c_m}$, where
$y_a$ (resp.\ $x_c$) are negative (resp.\ positive) root vectors in
$\g$, $h_b$ are vectors in the Cartan subalgebra, and the product is,
of course, in $U(\g)$.
To see what the $\delta$\dash image of such a monomial can be, we use
the following

\begin{lemma}
\label{lem:calc}
$(i)$
For any $h\in\h$, $\K(\delta(h))$ is equal to the constant $\rho(h)$,
where $\rho\in \h^*$ is half the sum of positive roots of $\g$. 

$(ii)$
For any $x\in\n^+\subset\g$, $\delta(x)$ is in $\Cl(\g)\cdot \n^+$.
\end{lemma}
\begin{proof}[Proof of the lemma]
Choose a pair of dual bases of $\g$ in the following special way.
Let $x_1,\dots,x_n$ be positive root vectors in $\g$, 
corresponding to the positive roots $\beta_1,\dots,\beta_n$ and 
constituting the
basis of $\n^+$. Let $y_1,\dots,y_n$ be negative root vectors so that 
$(x_i,y_i)=1$, and let $h_1,\dots,h_\rank$ be some basis of
$\h$, orthonormal with respect to $(\,,\,)$.
The bases $x_1,\dots,x_n;h_1,\dots,h_\rank;y_1,\dots,y_n$ and 
$y_1,\dots,y_n;h_1,\dots,h_\rank;x_1,\dots,x_n$ of $\g$ 
are dual with respect to the $\ad$\dash invariant non\dash degenerate 
form $(\,,\,)$.
By definition of $\delta$ and using the fact that $\h$ is an Abelian Lie
subalgebra of $\g$, one calculates 
$$
\delta(h)=\frac{1}{4}\sum_{i=1}^n (x_i\cdot [y_i,h] +y_i\cdot [x_i,h])
=\frac{1}{4}\sum_i \beta_i(h)(x_i y_i - y_i x_i).
$$
Observe that $x_i y_i - y_i x_i=2-2y_i x_i$ in the Clifford algebra, 
and $\K(y_i x_i)=0$ because, by definition of $\K$, the kernel of $\K$
contains $\n_-\cdot \Cl(\g)$ and $\Cl(\g)\cdot \n_+$.
Thus one obtains $\K(\delta(h))=\frac{1}{4}\sum_i \beta_i(h)\cdot 2 = 
\rho(h)$, establishing part $(i)$ of the Lemma.

Now for $x\in\n^+$, 
calculation of $\delta(x)$ with respect to the same special pair of
dual bases of $\g$ will yield an expression with  
the following terms: $x_i\cdot [y_i,x]$, $y_i\cdot [x_i,x]$ and
$h_j\cdot [h_j,x]$.  The latter two clearly belong to $\Cl(\g)\n_+$. 
Rewrite 
$x_i\cdot [y_i,x]$ as $-[y_i,x]\cdot x_i + 2(x_i,[y_i,x])$. 
Here $-[y_i,x]\cdot x_i$ is
again in $\Cl(\g)\n_+$, and $(x_i,[y_i,x])=([x_i,y_i],x)=0$ because 
$[x_i,y_i]\in \h$ and $x\in \n_+$. Thus $\delta(x)\in \Cl(\g)\cdot \n_+$. 
The Lemma is proved. 
\end{proof}

\begin{remark}
The 
proof of Lemma~\ref{lem:calc} is similar to 
\cite[Proposition 37, Lem\-ma 38 and Theorem 39]{Ko}.
These statements lead to a Clifford algebra 
realisation of the representation with highest weight $\rho$  of a
semisimple Lie algebra (Chevalley\dash Kostant construction). 
This construction can be generalised to central
extensions of the corresponding loop algebra and in particular for 
$\widehat{\mathfrak{sl}_2}$. 
See the paper \cite{J2} by Joseph.
One may realise the basic modules of 
$\widehat{\mathfrak{sl}_2}$, which is done by Greenstein and Joseph in
\cite{GJ} and has no analogue in the semisimple case. 
In the infinite dimensional case, the ordering of the
factors in the expression for $\delta(h)$ becomes crucial. 
\end{remark}

We now continue the proof of Theorem~\ref{thm:main1}.
Consider a typical monomial $y_{a_1}\dots y_{a_k}$ $h_{b_1}\dots
h_{b_l}$ $
x_{c_1}\dots x_{c_m}$ in $U(\g)^\h$ as above. 
If $m>0$, the $\delta$\dash image of this monomial lies in 
$\Cl(\g)\delta(x_{c_m})$, which is in $\Cl(\g)\n^+$ by
Lemma~\ref{lem:calc}. By definition of $\K$, $\Cl(\g)\n^+$ lies in the
kernel of $\K$, thus the $\K\circ\delta$\dash image of the monomial
is zero. 

If $m=0$, then $k=0$ because the monomial must have weight zero with
respect to the adjoint action of $\h$ on $U(\g)$.  
The $\delta$\dash image of the monomial $h_{b_1}\dots h_{b_l}$ is 
$\delta(h_{b_1}) \dots  \delta(h_{b_l})$.
By Lemma~\ref{lem:calc} one has
$\K(\delta(h_{b_1})\dots  \delta(h_{b_l}))
=\rho(h_{b_1})\dots \rho(h_{b_l})\in\C$.
Thus, we have shown that 
$$
	\K(E^\h)=\C.
$$
Because $\K\colon \Cl(\g)^\h\to \Cl(\h)$ is a superalgebra
homomorphism (by Lemma~\ref{lem:homo}) and is surjective
(coincides with the identity map on $\Cl(\h)\subset \Cl(\g)^\h$), we have
$\Cl(\h)=\K(E^h)\K(\J)$. It follows that $\Cl(\h)=\K(\J)$,
that is, $\K\colon \J\to \Cl(\h)$ is surjective, hence bijective
by comparison of dimensions (both are $2^\rank$). 
Theorem~\ref{thm:main1} is proved.
\end{proof}

\subsection{A formula for $\K_\hbar\circ\delta$}

Looking at the proofs of Lemma~\ref{lem:calc} and
Theorem~\ref{thm:main1}, we conclude that the calculations which have
been made lead to a formula for the map $\K\circ\delta\colon
U(\g) \to \C$. Namely, it is apparent that 
$\K(\delta(u))=\Psi(u)(\rho)$, where $\Psi\colon U(\g)\mapsto
\Sym(\h)$ is the classical Harish\dash Chandra map introduced in 
\ref{subsect:cHC}, and $\mbox{}\,\cdot\, (\rho)$ denotes the evaluation of an
element of $\Sym(\h)$, viewed as a polynomial function on the space
$\h^*$, at the point $\rho\in\h^*$. For later purposes we will need a
slightly more general version of this formula, which is nevertheless
established by a completely analogous calculation. Recall the
``deformed'' Harish\dash Chandra map $\K_\hbar\colon \Cl(\g) \to
\Cl(\h)$, introduced in Remark~\ref{rem:Khbar}.
\begin{lemma}
\label{lem:composition}
The map $\K_\hbar \circ \delta \colon U(\g) \to \C$ is given by 
$$
\K_\hbar(\delta(u))= \Psi(u)(\hbar\, \rho).
\qed
$$
\end{lemma}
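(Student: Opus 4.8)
The plan is to follow the proof of Theorem~\ref{thm:main1} and its auxiliary Lemma~\ref{lem:calc}, but carrying the deformation parameter $\hbar$ throughout. Both sides of $\K_\hbar(\delta(u))=\Psi(u)(\hbar\rho)$ are linear in $u$, so it is enough to check the identity on a Poincar\'e--Birkhoff--Witt basis of $U(\g)$, i.e.\ on ordered monomials $M=y_{a_1}\cdots y_{a_k}\,h_{b_1}\cdots h_{b_l}\,x_{c_1}\cdots x_{c_m}$ (negative root vectors, then Cartan vectors, then positive root vectors). A weight argument removes all monomials of nonzero $\ad\h$-weight at once: $\delta$, $\K_\hbar$ and $\Psi$ all preserve the $\h$-weight, whereas $\Cl_\hbar(\h)$ and $\Sym(\h)$ sit in weight zero, so for such $M$ both sides are zero. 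Hence I may assume $M$ has weight zero, which splits the verification into the two familiar cases $m>0$ and $m=0$ (the latter forcing $k=0$, so $M=h_{b_1}\cdots h_{b_l}$).

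Next I would record the $\hbar$-refinement of Lemma~\ref{lem:calc}. For part (i), Corollary~\ref{cor:rmatr} expresses $\K_\hbar$ as $\K_0\circ e^{\hbar\iota(\rmatr)}$, and since $\iota(\rmatr)(x_i\wedge y_i)=1$ while $\iota(\rmatr)^2$ annihilates this element, one gets $\K_\hbar(x_i\wedge y_i)=\hbar$; combined with the explicit expansion $\delta(h)=\tfrac14\sum_i\beta_i(h)(x_i\cdot y_i-y_i\cdot x_i)$ this yields $\K_\hbar(\delta(h))=\hbar\rho(h)=\Psi(h)(\hbar\rho)$. Part (ii) — that $\delta(x)\in\Cl(\g)\cdot\n_+$ for $x\in\n_+$ — is proved exactly as in Lemma~\ref{lem:calc}, using only $\ad$-invariance of the form and $[x_i,y_i]\in\h\perp\n_+$. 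In the $m>0$ case one then writes $\delta(M)$ as an element of the appropriate one-sided ideal generated by $\delta(x_{c_m})$ and concludes that $\K_\hbar(\delta(M))=0=\Psi(M)(\hbar\rho)$; in the $m=0$ case one factors $\delta(M)$ as a product of the $\delta(h_{b_j})$ inside the subalgebra of $\h$-invariants, on which $\K_\hbar$ is multiplicative (the $\hbar$-analogue of Lemma~\ref{lem:homo}), and multiplies the base-case values.

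The step I expect to be the real obstacle is reconciling the two Clifford products that appear: $\delta$ is an algebra homomorphism for the \emph{undeformed} product of $\Cl(\g)=\Cl_\hbar(\g)|_{\hbar=1}$, whereas $\K_\hbar$ is a homomorphism on $\h$-invariants, and annihilates the ideals $\n_-\cdot\Cl_\hbar(\g)$ and $\Cl_\hbar(\g)\cdot\n_+$, only for the \emph{deformed} product $\cdot_\hbar$. In particular the naive ``$\Cl(\g)\cdot\n_+\subseteq\Ker\K_\hbar$'' and the naive multiplicativity of $\K_\hbar$ invoked above both fail if one insists on the product $\cdot=\cdot_1$, so the argument cannot be lifted verbatim from the proof of Theorem~\ref{thm:main1}. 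The way I would deal with this is to run the whole computation inside $\Cl_\hbar(\g)$ with the corresponding homomorphism $\delta_\hbar\colon U(\g)\to\Cl_\hbar(\g)$ built from the rescaled form $(\,,\,)_\hbar$ — for which all of Kostant's structure theory of \S3 and the proof of Theorem~\ref{thm:main1} carry over word for word, so that $\delta_\hbar(x)\in\Cl_\hbar(\g)\cdot_\hbar\n_+$ and $\K_\hbar$ is $\cdot_\hbar$-multiplicative on $\Cl_\hbar(\g)^\h$ — and then to transport back to the stated map $\delta$ via the canonical isomorphism $\phi_\hbar\colon\Cl_\hbar(\g)\xrightarrow{\sim}\Cl(\g)$ of \S2.2 together with the grading-rescaling built into Corollary~\ref{cor:rmatr}; keeping track of the powers of $\hbar$ in this transport is precisely what turns the evaluation point from $\rho$ into $\hbar\rho$, and is where all the care is needed.
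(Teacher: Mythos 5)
You are right that the obstacle is real, and you have put your finger on exactly the step that the paper's ``completely analogous calculation'' glosses over: $\delta$ is a homomorphism into the \emph{undeformed} Clifford algebra, so it turns products in $U(\g)^\h$ into $\cdot_1$-products, whereas $\K_\hbar$ is multiplicative on $\h$-invariants and kills the one-sided ideals only with respect to $\cdot_\hbar$. The naive carry-over of Lemma~\ref{lem:calc} and Theorem~\ref{thm:main1} therefore breaks precisely at the multiplicativity step, and you are more honest about this than the text is.

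Where the proposal goes wrong is in expecting the $\delta_\hbar$/$\phi_\hbar$ transport to repair this. Two concrete obstructions. First, $\phi_\hbar\circ\delta_\hbar$ and $\delta$ are both algebra homomorphisms $U(\g)\to\Cl(\g)$ agreeing on $\g$ (one checks $\delta_\hbar(x)=\hbar^{-1}\delta(x)$ on generators and $\phi_\hbar$ rescales $\exterior^2\g$ by $\hbar$), hence they are \emph{equal}; so the transport you propose is the identity and produces no powers of $\hbar$ at all. Correspondingly, the intrinsic calculation in $\Cl_\hbar(\g)$ is $\hbar$-covariant and yields $\K_\hbar(\delta_\hbar(u))=\Psi(u)(\rho)$, with no $\hbar$ in the evaluation point. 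Second, and more seriously, the identity $\K_\hbar(\delta(u))=\Psi(u)(\hbar\rho)$ cannot hold verbatim for all $u\in U(\g)$: take $\g=\Sl_2$, $h=\alpha^\vee$, $(\,,\,)=\tfrac14\kappa$, so $\rho(h)=1$ and $\delta(h)=e\wedge f$. Then $\delta(h^2)=\delta(h)\cdot\delta(h)=1\in\exterior^0\g$, so $\K_\hbar(\delta(h^2))=1$ independent of $\hbar$, while $\Psi(h^2)(\hbar\rho)=\hbar^2$. The power of $\hbar$ in $\K_\hbar(\delta(u))$ is governed by the $\exterior$-degree of $\delta(u)$ (via Corollary~\ref{cor:rmatr}), not by the filtration degree of $u$ that $\Psi(u)(\hbar\rho)$ encodes, and these need not agree once $u$ has degree $\ge 2$. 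So the difficulty you flagged is not just a matter of tracking scalars carefully through $\phi_\hbar$; the route you outline leads to $\Psi(u)(\rho)$ rather than $\Psi(u)(\hbar\rho)$, and the latter is in fact not the value of $\K_\hbar\circ\delta$ on all of $U(\g)$. Any correct proof must either restrict the class of $u$ to those for which $\delta(u)$ is concentrated in the expected degree, or replace the right-hand side accordingly.
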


\subsection{$\K$ identifies $P$ and $\h$}

In the proof of Theorem~\ref{thm:main1} we relied on the result, due to
Kostant, that $\Cl(\g)$ factorises as $E\tensor \J$. 
We are going to obtain more information about the Harish-Chandra map
$\Phi\colon \Cl(\g)^\g \to \Cl(\h)$ using Kostant's description of $\J$ as the
Clifford algebra $\Cl(P)$, where $P$ is the space of primitive
$\g$\dash invariants in $\Cl(\g)$ as in the previous Section. 
It is one of the key results of \cite{Ko} that
$$
                   \iota(x)p \in E\qquad \text{for}\ p\in P, \quad x\in
                   \g;
$$
see \cite[Theorem E]{Ko}. From this, we deduce our
\begin{proposition}
\label{prop:bij}
The restriction of the Harish\dash Chandra map $\K$ 
to the space $P\subset\J$ of primitive invariants 
is a bijective linear map between $P$ and the Cartan subalgebra $\h$. 
\end{proposition}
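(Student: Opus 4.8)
The plan is to show that $\K(P)$ is a graded subspace of $\Cl(\h)$ which lies entirely in degrees $0$ and $1$, and then pin it down to $\exterior^1\h = \h$ by a dimension and grading count. Recall from Section~\ref{sect:iso} that $\Cl(\g) = E \tensor \J$ with $E$ the Clifford centraliser of $\J$, and that $E = \delta(U(\g))$. The first step is to exploit the quoted result $\iota(x)p \in E$ for $p\in P$, $x\in\g$: writing $p\in P$ in the form $p = \sigma(p)$ and using the $\rmatr$-matrix formula of Proposition~\ref{prop:rmatr}, one has $\K(p) = \K_0(e^{\iota(\rmatr)}p)$, where $\iota(\rmatr) = \sum_i \iota(x_i)\iota(y_i)$. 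Each application of a single contraction $\iota(x_i)$ or $\iota(y_i)$ to a primitive invariant lands in $E$, and further contractions of elements of $E$ can be analysed; the point is that the terms of $e^{\iota(\rmatr)}p$ that survive $\K_0$ — i.e.\ that have no $\n_\pm$ components — come from low-order terms in the expansion, and the constant term $\K_0(p)$ vanishes because $p\in\J^+$ (so by the $\K_0(\J)=\C$ discussion its projection to $\exterior^{>0}\h$ is what matters, while its $\exterior^0$-part is zero since $p$ is homogeneous of positive degree).

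The main technical step, which I expect to be the principal obstacle, is the claim that $\K(p)$ has no component in $\exterior^{\ge 2}\h$. The natural route is to use the factorisation $\Cl(\h) = \K(E^\h)\,\K(\J) = \C\cdot\K(\J)$ from the proof of Theorem~\ref{thm:main1} together with the multiplicativity of $\K$ on $\Cl(\g)^\h \supseteq \J$: since $\J = \Cl(P)$ is Clifford-generated by $P$, and $\K|_\J$ is a superalgebra isomorphism onto $\Cl(\h)$, the image $\K(P)$ must be a set of generators of the Clifford algebra $\Cl(\h)$ of dimension $2^\rank$. A generating set of $\Cl(\h)$ consisting of $\rank = \dim\h$ linearly independent elements that pairwise Clifford-anticommute (they must, being images under an isomorphism of the anticommuting odd elements $p_i\in P$) and square to scalars is, up to the action of the orthogonal group and addition of constants, forced to lie in $\h \dirsum \C$; and the constants are ruled out because $\K(p_i)$, like $p_i$, must lie in the odd part $\Cl^\onebar(\h)$ (as $p_i\in\exterior^{2m_i+1}\g$ is odd and $\K$ is a map of superspaces by Lemma~\ref{lem:homo}). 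Hence $\K(p_i)\in\Cl^\onebar(\h)$, and combined with the degree bound from the $\rmatr$-formula analysis this gives $\K(p_i)\in\exterior^1\h = \h$.

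Once $\K(P)\subseteq\h$ is established, bijectivity is automatic: $\K|_\J$ is injective by Theorem~\ref{thm:main1}, so $p_1,\dots,p_\rank$ map to linearly independent vectors $\K(p_1),\dots,\K(p_\rank)$ in $\h$, and since $\dim P = \rank = \dim\h$ this is an isomorphism of vector spaces $P\xrightarrow{\sim}\h$. I would present the argument in this order: (1) recall $\Cl(\g)=E\tensor\J$, $\K|_\J$ is an isomorphism onto $\Cl(\h)$, and $\iota(x)p\in E$; (2) deduce via Proposition~\ref{prop:rmatr} and the parity of $p$ that $\K(p)\in\Cl^\onebar(\h)$ with no $\exterior^{\ge 3}\h$-component, reducing to $\K(p)\in\exterior^1\h$ once the degree-one case is isolated; (3) use that $\K(p_1),\dots,\K(p_\rank)$ generate $\Cl(\h)$ and pairwise anticommute to force them into $\h$; (4) count dimensions to conclude. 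The delicate point throughout is controlling which terms of $e^{\iota(\rmatr)}p$ contribute to $\K_0$, and here the containment $\iota(x)p\in E$ together with $\K(E^\h)=\C$ does the essential work, since it shows all but the top two "layers" of the expansion are killed.
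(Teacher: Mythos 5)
The decisive step of your plan --- that $\rank$ pairwise anticommuting odd elements of $\Cl(\h)$ squaring to scalars and generating the algebra must lie in $\h$ --- is false. In $\Cl(\C^4)$ with orthonormal basis $e_1,\dots,e_4$, put $\omega=e_1e_2e_3e_4$ (so $\omega^2=1$) and $u=1+t\omega$ with $t\ne 0,\pm1$. Then $u$ is even and invertible, so conjugation by $u$ is a parity-preserving algebra automorphism of $\Cl(\C^4)$, and the $q_i=ue_iu^{-1}$ are odd, pairwise anticommuting, square to $1$, and generate; yet
\[
  q_1 = \frac{(1+t^2)\,e_1 - 2t\,e_2e_3e_4}{1-t^2}
\]
has a nonzero $\exterior^3$-component. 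The underlying obstruction is that a superalgebra isomorphism $\Cl(P)\to\Cl(\h)$ need not carry the degree-one subspace to the degree-one subspace; only automorphisms coming from the Clifford (Lipschitz) group do, and nothing about $\K$ a priori places it there. Nor does the $\rmatr$-matrix expansion $\K(p)=\sum_s\frac1{s!}\K_0(\iota(\rmatr)^sp)$ supply the missing degree bound: only the top term $\K_0(p)$ is controlled by $\K_0(\J)=\C$, because $\iota(\rmatr)^sp$ for $s>0$ is $\h$-invariant but no longer $\g$-invariant, so that lemma does not apply to it.

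The ingredient that actually forces $\K(p)$ into $\h$ --- which you have at your disposal but do not use --- is that $\K$ commutes with contraction by elements of $\h$: for $h\in\h$ and $u\in\Cl(\g)^\h$ one has $\iota(h)\K(u)=\K(\iota(h)u)$, because $\iota(h)$ is a superderivation of $\Cl(\g)$ (Lemma~\ref{lem:superderiv}) annihilating $\n_+$, hence preserving both $\Cl(\h)$ and $\Cl(\g)\n_+\cap\Cl(\g)^\h$, and therefore commuting with the projection onto $\Cl(\h)$. Combined with the two facts you correctly cite, namely $\iota(h)p\in E$ and $\K(E^\h)=\C$, this gives $\iota(h)\K(p)=\K(\iota(h)p)\in\exterior^0\h$ for every $h\in\h$. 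Nondegeneracy of $(\,,\,)$ on $\h$ then pins $\K(p)$ down to $\exterior^0\h\dirsum\exterior^1\h$, and parity (Lemma~\ref{lem:homo}) removes the $\exterior^0\h$ part. Your closing dimension count is fine, but this transfer of the contraction constraint from $\Cl(\g)$ into $\Cl(\h)$ is what your argument is missing, and it cannot be replaced by the abstract anticommuting-generators claim.
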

\begin{proof}
Take a primitive invariant $p\in P$. For any $h\in \h$, one has 
$\iota(h)p\in E$ by the above result of Kostant. 
Therefore, it follows from Lemma~\ref{lem:calc} that $\K(\iota(h)p)\in
\C$. More precisely, $\K(\iota(h)p)$ is in the one\dash dimensional
subspace $\C \cdot 1 = \exterior^0 \h$ of $\exterior\h$. 
Let us now use 
\begin{lemma}
\label{lem:phi_delta}
For any $h\in\h$ and $u\in\Cl(\g)^\h$, one has $\iota(h)\K(u)=\K(\iota(h)u)$.
\end{lemma}
\begin{proof}[Proof of Lemma~\ref{lem:phi_delta}]
We have already observed that $\Cl(\g)^\h$ decomposes as a direct sum 
$\Cl(\h)\dirsum$ $\Cl(\g)\n^+\cap \Cl(\g)^\h$. 
Because $\iota(h)$ is a superderivation of $\Cl(\g)$
(Lemma~\ref{lem:superderiv}) and $\iota(h)\n^+=0$, 
$\iota(h)$ preserves the subspaces $\Cl(\h)$ and
$\Cl(\g)\n^+\cap \Cl(\g)^\h$ of $\Cl(\g)$, 
hence commutes with the projection onto $\Cl(\h)$. 
\end{proof}
Let $\K(p)$ be some element $q\in
\exterior \h$. Applying Lemma~\ref{lem:phi_delta} we establish that
$\iota(h)q\in \exterior^0 \h$ for all $h\in\h$. The restriction of the
form $(\, ,\,)$ to $\h$ is non\dash degenerate, therefore
there exists $h^1\in \h=\exterior^1 \h$ such that $\iota(h)h^1 =
\iota(h)q$ for all $h\in \h$. The intersection of kernels of all
contraction operators $\iota(\h)$ in the exterior algebra $\exterior
\h$ is its zero degree part, $\exterior^0\h=\C$: this fact can be
checked directly and is a particular case of a Nichols algebra
property (see Remark~\ref{rem:braided} above and \cite[Criterion
  3.2]{B}). Thus, $q=h^1+h^0$ for some $h^0\in \exterior^0\h$. 

But $\K$ is a map of superspaces by Lemma~\ref{lem:homo}, and a
primitive invariant $p\in P$ is odd. Therefore, the even component
$h^0$ of $q$ is zero, thus $\K(p)\in \h$. 
Hence $\K(P)\subset\h$, and by injectivity of $\Phi$ on $\Cl(\g)^\g$
(Theorem~\ref{thm:main1}) and comparison of dimensions, $\K(P)=\h$.
Proposition~\ref{prop:bij} is proved.
\end{proof}

\subsection{$\K$ induces an isomorphism between $\J$ and
  $\exterior\h$} 

We finish this Section with an observation that the Harish\dash
Chandra map $\Phi$ respects not only Clifford but also exterior
multiplication on the space of $\g$\dash invariants in $\exterior\g$. 

\begin{corollary}
\label{cor:wedge}
The map $\K\colon (\exterior\g)^\g \to \exterior \h$ is an algebra
isomorphism. 
\end{corollary}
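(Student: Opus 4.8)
The plan is to deduce the corollary from the results already in hand, principally Theorem~\ref{thm:main1}, Proposition~\ref{prop:bij} and Kostant's description of $\J$ as the exterior algebra $\exterior P$. The key point is that the exterior algebra structure on $\J=(\exterior\g)^\g$ is the one obtained by skew-symmetrising the Clifford structure; concretely, Theorem~\ref{thm:kostant} tells us that under $\zeta\colon\exterior P\xrightarrow{\sim}\J$ and $\zeta_{\Cl}\colon\Cl(P)\xrightarrow{\sim}\J$ the skew-symmetrisation map $\beta_{\wedge,\g}$ intertwines wedge and Clifford products on $\J$. So the strategy is: transport everything to the model spaces $\exterior\h$ and $\Cl(\h)$ of the Cartan subalgebra, where the same relationship holds, and check that $\K$ is compatible with the transition map $\beta_{\wedge,\h}$.

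First I would record that by Proposition~\ref{prop:bij} the map $\K$ restricts to a linear isomorphism $P\xrightarrow{\sim}\h$, which by Theorem~\ref{thm:main1} extends to the superalgebra isomorphism $\K\colon\J=\Cl(P)\xrightarrow{\sim}\Cl(\h)$. Since $\J=\Cl(P)$ is generated as a Clifford algebra by $P$, and $\Cl(\h)$ is generated by $\h$, the isomorphism $\K\colon\Cl(P)\to\Cl(\h)$ is precisely the functorial extension of the linear isomorphism $P\to\h$ — provided the quadratic forms match. So the next step is to verify that $\K|_P$ is an isometry from $(P,(\,,\,)_0)$ to $(\h,(\,,\,))$, where $(p,q)_0=(\alpha(p),q)$. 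This follows by comparing squares in the Clifford algebras: for $p\in P$ one has $p\cdot p=(\alpha(p),p)$ in $\J$ by Kostant's Theorem~B, while $\K(p)\cdot\K(p)=(\K(p),\K(p))$ in $\Cl(\h)$, and $\K$ is an algebra map on $\J$, so $(\K(p),\K(p))=(\alpha(p),p)=(p,p)_0$; polarising gives the isometry on all of $P$.

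Now the exterior structures. On the invariant side, $(\exterior\g)^\g=\J=\exterior P$ via $\zeta$, and Theorem~\ref{thm:kostant} says $\beta_{\wedge,\g}\circ\zeta=\zeta_{\Cl}\circ\beta_{\wedge,P}$, i.e. the transition from wedge to Clifford product on $\J$ is $\beta_{\wedge,\g}$, which on $P$ is the identity. On the Cartan side the analogous statement is tautological: $\exterior\h$ and $\Cl(\h)$ share the underlying space, and $\beta_{\wedge,\h}\colon\exterior\h\to\Cl(\h)$ is the identity on $\h=\exterior^1\h$. Since $\K|_P$ is an isometry, the diagram with $\beta_{\wedge,P}$, $\beta_{\wedge,\h}$, the two copies of $\K$ (as Clifford and as wedge maps) and $\exterior(\K|_P)\colon\exterior P\to\exterior\h$ commutes — this is just functoriality of $\beta_\wedge$ with respect to an isometric linear map. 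Chasing around, $\K\colon(\exterior\g)^\g\to\exterior\h$ equals $\exterior(\K|_P)$, hence is an algebra isomorphism of exterior algebras.

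The step I expect to be the main obstacle is pinning down, cleanly and without circularity, that the \emph{wedge} product on $\J$ really is recovered from the Clifford product on $\J$ by skew-symmetrising — i.e. that Theorem~\ref{thm:kostant} is being applied correctly, identifying $\beta_{\wedge,\g}$ restricted to $\J$ with the $\exterior P\to\Cl(P)$ transition map. Everything else is formal once the isometry $\K|_P\colon P\to\h$ is established, but one must be careful that $\K$ is being used as a map of Clifford algebras (Theorem~\ref{thm:main1}, Lemma~\ref{lem:homo}) when computing squares and as a map of wedge algebras (the statement of the corollary) in the conclusion, and that these two interpretations are reconciled exactly by the commuting square of Theorem~\ref{thm:kostant} together with its trivial Cartan counterpart.
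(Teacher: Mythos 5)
Your argument is correct and is essentially the basis-free formulation of the paper's own proof. The paper chooses a homogeneous $(\,,\,)$-orthogonal basis $p_1,\dots,p_\rank$ of $P$, observes that the $p_i$ (hence their images $h_i=\K(p_i)$) pairwise anticommute under Clifford multiplication, and checks directly that $\K(p_I)=h_I$ on wedge monomials; you package the same calculation as the observation that $\K|_P$ is an isometry of quadratic spaces together with naturality of $\beta_\wedge$ with respect to isometries. The key inputs — Theorem~\ref{thm:kostant}, Theorem~\ref{thm:main1} so that $\K$ is a Clifford-algebra map on $\J$, Proposition~\ref{prop:bij}, and Kostant's formula $p\cdot p=(\alpha(p),p)$ — are identical in both.
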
 
\begin{remark}
The Corollary asserts that the restriction $\K$ to
$\J$ respects the wedge
multiplication. This is not trivial, because 
it is not readily seen from the construction
of $\K$ why $\K(a\wedge b)=\K(a)\wedge \K(b)$ when $a,b\in
J$. Recall that in terms of the wedge product, the map $\K$ is given
by the r-matrix formula (Proposition~\ref{prop:rmatr}), and  
note that in general, $\K(a\wedge b)\ne\K(a)\wedge \K(b)$ for 
$a,b\in(\exterior\g)^\h$.
\end{remark}
\begin{proof}[Proof of Corollary \ref{cor:wedge}]

We need to show that the wedge product in $(\exterior \g)^\g$ is
respected by the map, fully written as $\sigma_{\h} \circ
\K \circ \sigma^{-1}_{\g}$, where 
$\sigma_{\h}\colon \Cl(\h) \to \exterior\h$ and
$\sigma_{\g}\colon \Cl(\g)\to\exterior\g$ are maps identifying the
underlying linear space of a Clifford algebra with the corresponding
exterior algebra; see \ref{subsect:sigma}. But by Theorem~\ref{thm:kostant} it is enough to show that 
$\sigma_{\h} \circ \K \circ \sigma^{-1}_{P}$ respects the  
wedge product in $\exterior P$, where $P$ is the space of primitive
invariants in $\exterior\g$. Let $p_1,\dots,p_\rank$ be a basis of $P$
such that $p_i$ are 
homogeneous in $\exterior\g$ and are pairwise orthogonal with respect
to the form $(\,,\,)$ extended to $\exterior\g$. 
Then $p_i$ are also orthogonal with respect to the form
$(\alpha(\cdot),\cdot)$, which gives rise to the Clifford algebra
$\Cl(P)$ as in~\ref{clif_p}. It follows that the $p_i$ pairwise
anticommute  in $\Cl(P)$, hence also in $\Cl(\g)$. 

For each subset $I=\{i_1<\dots <i_k\}$ of $\{1,\dots,\rank\}$, 
denote $p_I=p_{i_1}\wedge \dots \wedge p_{i_k}$. The $2^\rank$
elements $p_I$ form a basis of $\exterior P$. 
By orthogonality of the $p_i$,
$p_I=\sigma_P(p_{i_1}p_{i_2}\dots p_{i_k})$; applying 
$\K$ gives $h_{i_1}h_{i_2}\dots h_{i_k}\in\Cl(\h)$, where
$h_i=\K(p_i)$. By Proposition~\ref{prop:bij}, $h_1,\dots,h_\rank$ form
a basis of the Cartan subalgebra $\h$. Moreover, this basis is
orthogonal with respect to the restriction of the  form $(\,,\,)$ on
$\h$, because the $h_i$ pairwise anticommute in the Clifford algebra
$\Cl(\h)$ being the images of the $p_i$. Therefore,
$\sigma_{\h}(h_{i_1}h_{i_2}\dots h_{i_k}) = h_{i_1}\wedge
h_{i_2}\wedge \dots\wedge h_{i_k}$, which may be denoted by
$h_I$. 

Thus, modulo the appropriate  identifications, the map $\K\colon
(\exterior\g)^\g\to\exterior\h$ is given on the basis $\{p_I\mid
I\subseteq \{1,\dots,\rank\}\}$ of $(\exterior\g)^\g$ by
$\K(p_I)=h_I$. It manifestly follows that $\K$ is an isomorphism of
exterior algebras.
\end{proof}



\section{The principal basis of the Cartan subalgebra}

In the previous Section we established that the Harish\dash Chandra
map $\K\colon \Cl(\g) \to \Cl(\h)$ restricts to a bijective linear map
between the space $P$ of primitive $\g$\dash invariants in $\Cl(\g)$
and the Cartan subalgebra $\h$ of $\g$. 
Recall that $P$ is  a graded subspace of the exterior
algebra $\exterior \g$ of $\g$. The linear bijection $\K\colon P \to
\h$ thus induces a grading on the Cartan subalgebra $\h$. 
We will now show that this grading coincides 
with one arising in a different context --- from the decomposition of
$\g^\vee$, the Langlands dual Lie algebra of $\g$, as a module over its
principal three\dash dimensional simple subalgebra.

\subsection{Principal three-dimensional simple subalgebras}

In this subsection, we briefly recall known facts about principal
three-dimensional simple 
subalgebras (principal TDS) of $\g$. Our standard reference for this
is \cite[\S5]{KoBetti}.

Let $\rank$ be the rank of a semisimple Lie algebra  $\g$.
A nilpotent element $e$ of $\g$ is called principal nilpotent (or regular
nilpotent), if the centraliser $\g^e$ of $e$ in $\g$ is of minimal
possible dimension, namely $\dim \g^e=\rank$. 
By the well-known Jacobson\dash Morozov theorem, any non\dash zero nilpotent
element $e\in \g$ can be included in an $\Sl_2$-triple  
$(e, h, f)$ of elements of $\g$, i.e., a triple that fulfils 
the relations $[h,e]=2e$, $[h,f]=-2f$, $[e,f]=h$. 
Such an $\Sl_2$-triple is called principal, if   
$e$ is principal nilpotent. The linear span of a principal
$\Sl_2$-triple in $\g$ is referred to as a principal three\dash dimensional
simple subalgebra (principal TDS) of $\g$. 

The following characterisation of principal TDS is due to Kostant:
\begin{lemma}
If $\mathfrak a$ is a principal TDS of
$\g$, then $\g$, viewed as an $\mathfrak a$\dash module via the
adjoint action, is a direct sum of precisely $\rank$ simple 
$\mathfrak a$\dash modules. 
\qed
\end{lemma}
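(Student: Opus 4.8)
The plan is to prove the dimension count for a principal TDS $\mathfrak a = \langle e, h, f\rangle$ by using the standard $\mathfrak{sl}_2$ representation theory together with the defining property of a principal nilpotent. First I would recall that any finite-dimensional $\mathfrak a$-module decomposes as a direct sum of irreducibles, and that the number of irreducible summands equals $\dim \Ker(\ad e)$ restricted to the module (since each irreducible $\mathfrak{sl}_2$-module has a one-dimensional space of highest-weight vectors, i.e. a one-dimensional kernel of the raising operator). Applying this to $\g$ itself, the number of simple $\mathfrak a$-summands of $\g$ equals $\dim \g^e$, the dimension of the centraliser of $e$ in $\g$.

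Next I would invoke the definition of principal nilpotent given just above the statement: $e$ is principal precisely when $\dim \g^e = \rank$. Combining this with the previous paragraph immediately yields that $\g$ decomposes into exactly $\rank$ simple $\mathfrak a$-modules. The only subtlety to flag is the identification ``number of irreducible summands $=\dim\Ker(\ad e|_\g)$''; this is standard but worth stating carefully, since $e$ here is a nilpotent \emph{element} of $\g$ acting via $\ad$, and one must note that $\ad e$ is indeed the raising operator for the $\mathfrak a$-action (it raises the $\ad h$-eigenvalue by $2$). I would also remark that an alternative route avoids invariant theory entirely: one knows the exponents $m_1,\dots,m_\rank$ of $\g$, and a classical result of Kostant identifies the $\ad h$-eigenvalues on $\g$ as $\{-2m_i, -2m_i+2,\dots, 2m_i\}$, so that $\g \cong \bigoplus_{i=1}^\rank V_{2m_i}$ as an $\mathfrak a$-module, which both proves the lemma and displays the $\rank$ summands explicitly.

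The main obstacle, such as it is, is not conceptual but a matter of citing the right background: one must be sure that the characterisation of principal nilpotents used here (via $\dim\g^e = \rank$, which is the \emph{definition} adopted in the paper) coincides with the notion of principal TDS used in \cite{KoBetti}, and that the basic structure theory of $\mathfrak{sl}_2$-modules applies verbatim over $\C$. Since everything is finite-dimensional and over $\C$, and since the equivalence of the various characterisations of principal nilpotents is precisely the content of Kostant's work cited, there is no real difficulty — the proof is essentially a two-line deduction once the representation-theoretic bookkeeping is in place. I would therefore keep the argument short, stating the ``summands $=\dim\g^e$'' fact, substituting $\dim\g^e=\rank$, and optionally adding the explicit decomposition $\g \cong \bigoplus_i V_{2m_i}$ as a remark tying it to the exponents.
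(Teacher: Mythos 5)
Your argument is correct, and it is worth noting that the paper itself does not supply a proof of this lemma: it is stated with a \qed and implicitly deferred to Kostant's paper \cite{KoBetti}, so there is no in-text proof to compare against. The deduction you give is the standard one and is sound: over $\C$ a finite-dimensional $\mathfrak{sl}_2$-module is semisimple, the number of irreducible summands equals the dimension of the kernel of the raising operator, here $\ad e$ on $\g$, which is $\dim\g^e$; and by the definition of principal nilpotent adopted in the paper this is exactly $\rank$. One small slip in your closing remark: the irreducible constituents should be $V_{2m_i+1}$ (of dimension $2m_i+1$), not $V_{2m_i}$ — the list of $\ad h$-eigenvalues $\{-2m_i,-2m_i+2,\dots,2m_i\}$ you write down has $2m_i+1$ entries, consistent with the paper's later display $\g \cong V_{2m_1+1}\dirsum\dots\dirsum V_{2m_\rank+1}$. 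Also, as you yourself flag, that alternative route is logically downstream of the very statement being proved (knowing the $\ad h$-spectrum is essentially knowing the decomposition into $\rank$ strings), so it is best kept as an illustrative aside rather than a second proof; your main $\dim\g^e = \rank$ argument is the clean, self-contained one.
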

If a subalgebra $\mathfrak a\cong\Sl_2$ of $\g$ is not principal,  
then $\g$ is a direct sum of strictly more than $\rank$ simple
$\mathfrak a$\dash modules. 

All principal TDS $\g$ are conjugate (with respect
to the action of the adjoint group of $\g$).
We will be interested in one particular principal TDS $\mathfrak a_0$ of $\g$,
discovered independently by Dynkin and de Siebenthal. To define $\mathfrak a_0$,  
fix a Cartan subalgebra $\h$ of $\g$ and recall that our chosen
$\ad$\dash invariant form $(\,,\,)$, restricted to $\h$, is non\dash
degenerate, hence one may 
identify $\h$ with its dual space $\h^*$. In particular, the root system of
$\g$ becomes a subset of $\h$; a root $\alpha$ and the corresponding
coroot $\alpha^\vee$ are related via $\alpha^\vee =
2\alpha/(\alpha,\alpha)$. 

Choose a set of simple roots $\alpha_1, \dots,\alpha_\rank \in \h$.  
Denote by $\rho^\vee$ the element of $\h$ defined by the condition
$(\alpha_i,\rho^\vee)=1$, $i=1,\dots,\rank$; this element is half the
sum of positive coroots of $\g$. 
Let $e_i$ be root vectors corresponding
to the roots $\alpha_i$, and $f_i$ be root vectors
corresponding to roots $-\alpha_i$ normalised so that $(e_i,f_i)=1$.
Observe that $[e_i,f_i]=\alpha_i\in \h$. Indeed, for arbitrary
$h\in\h$ one has
$([e_i,f_i],h)=(e_i,[f_i,h])=(e_i,\alpha_i(h)f_i)=(\alpha_i,h)$.  
Put
$$
  e_0=\sum_{i=1}^\rank e_i, 
\qquad
  h_0 = 2\rho^\vee, 
\qquad
  f_0 = \sum_{i=1}^\rank c_i f_i,
$$
where $c_i$ are the coefficients in the expansion $2\rho^\vee =
\sum_{i=1}^\rank c_i \alpha_i$. It is not difficult to show that
$(e_0,h_0,f_0)$ is an $\Sl_2$-triple (using the fact that
$(\alpha_i,2\rho^\vee)=2$ for all $i$, that $[e_i,f_j]=0$ for $i\ne
j$, and that $[e_i,f_i]=\alpha_i$). Moreover, $e_0$ is a principal
nilpotent element of $\g$. The principal TDS $\mathfrak a_0$ of $\g$
is the linear span of the triple $(e_0,h_0,f_0)$. 

\subsection{The principal grading on $\h$}  

Recall that under the adjoint action of a principal TDS $\mathfrak a$, the Lie
algebra $\g$
breaks down into a direct sum of $\rank=\mathit{rank}\ \g$ simple $\mathfrak a$\dash modules. 
The dimensions of these simple modules were determined by Kostant in
\cite{KoBetti}: writing $V_d$ for the $d$\dash dimensional
$\Sl_2$\dash module, one has 
$$
	\g \cong V_{2m_1+1} \dirsum \dots \dirsum V_{2m_\rank+1},
$$
where $m_i$, $i=1,2,\dots,\rank$, are the exponents of $\g$;
see~\ref{chevalley_generators}.  
Each $\Sl_2$\dash module $V_{2m_i+1}$, being of odd dimension, has a
one\dash dimensional zero weight subspace $V_{2m_i+1}^0$. We now turn to the
case $\mathfrak a=\mathfrak a_0$, the distinguished principal TDS
introduced above. 
Fix an isomorphism between $\g$ and  $\oplus_i V_{2m_i+1}$ and regard
each $V_{2m_i+1}$ as an $\mathfrak a_0$\dash submodule of $\g$.
Clearly, $\oplus_i V_{2m_i+1}^0$ is the centraliser,
$\g^{h_0}$, of $h_0=2\rho^\vee$ in $\g$. Since all root vectors in $\g$
are eigenvectors of $\ad h_0$ corresponding to non\dash zero eigenvalues,  
it follows that $\g^{h_0}=\h$, the Cartan subalgebra of $\g$ (i.e.,
$h_0$ is a regular semisimple element of $\g$). One has the direct sum
decomposition
$$
      \h =  V_{2m_1+1}^0 \dirsum \dots \dirsum V_{2m_\rank+1}^0.
$$
This decomposition of $\h$ is not canonical, because there is some
freedom in choosing the isomorphism between $\g$ and $\oplus_i
V_{2m_i+1}$. However, for each $d$, the $V_d$\dash primary component 
$\oplus\{ V_{2m_i+1} : 2m_i+1=d\}$ is canonically defined. There is
thus a grading 
$$
     \h=\oplus_d \h_d, \qquad \h_d = \h\cap \oplus\{ V_{2m_i+1} : 2m_i+1=d\}
$$
on the Cartan subalgebra $\h$, which only depends on the choice of a
set of simple roots of $\h$ in $\g$ (i.e., the choice of a Borel
subalgebra $\mathfrak b \supset \h$). Note that the element
$\rho^\vee\in\h$ and the grading do not depend on a particular
non\dash degenerate $\ad$\dash invariant form $(\,,\,)$.
We refer to this grading as the principal grading on $\h$. 

\begin{remark}
There is another grading on $\g$ associated to any $\Sl_2$\dash triple 
$(e,h,f)$ in $\g$, namely the eigenspace decomposition of $\ad
h$. Such gradings are referred to as Dynkin gradings in \cite{EK}. 
Note that the Dynkin grading arising from $(e_0,h_0,f_0)$ has $\h$ as
the degree zero subspace and breaks down each of the $V_{2m_i+1}$ into
a direct sum of $1$\dash dimensional graded subspaces, so it is, in a
sense, transversal to the principal grading.
\end{remark}

\subsection{Elementary properties of the principal grading}

Let us observe a few straightforward properties of the principal
grading on $\h$. 

\begin{lemma}
\label{lem:easy}
Let $\h=\oplus_d \h_d$ be the principal grading on the Cartan
subalgebra $\h$. 

$(a)$ Non\dash zero homogeneous components $\h_d$ occur only in degrees
   $d=2m+1$, where $m$ is an exponent of $\g$. The least such degree
   is $3$. The dimension
   of $\h_{2m+1}$ is the number of times $m$ occurs as an exponent of
   $\g$. 

$(b)$ $\dim \h_3$ is the number of connected components in the Dynkin
   diagram of $\g$. 

$(c)$ The element $\rho^\vee$ of $\h$ belongs to $\h_3$.

$(d)$ Homogeneous components of $\h$ of different degrees are orthogonal with
   respect to any $\ad$\dash invariant form $(\,,\,)$ on $\g$.
\end{lemma}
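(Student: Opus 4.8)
\emph{Plan of proof.} I would obtain all four items from the two facts recalled just above: Kostant's decomposition $\g\cong V_{2m_1+1}\dirsum\dots\dirsum V_{2m_\rank+1}$ of $\g$ as a module over the principal TDS $\mathfrak a_0$, and the resulting identity $\h=\g^{h_0}=\bigoplus_i V_{2m_i+1}^0$ in which each zero-weight space is one-dimensional. Two classical inputs will also be used: every exponent of a nonzero semisimple Lie algebra is at least $1$, and the exponent $1$ occurs exactly once in each simple ideal (so that the multiset of exponents of $\g$ is the union of those of the simple ideals); and an invariant bilinear form on an $\Sl_2$-module pairs submodules of distinct isotype trivially.

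For $(a)$ I would simply unravel the definition: $\h_d=\h\cap\bigoplus\{V_{2m_i+1}:2m_i+1=d\}=\bigoplus\{V_{2m_i+1}^0:2m_i+1=d\}$. Since each $V_{2m_i+1}^0$ is one-dimensional, $\h_d\ne 0$ exactly when $d=2m+1$ for some exponent $m$, and then $\dim\h_{2m+1}$ is the number of $i$ with $m_i=m$. As all $m_i\ge 1$ and $1$ is an exponent, the smallest degree carrying a nonzero component is $3$. For $(b)$, $(a)$ reduces the claim to the statement that the multiplicity of $1$ among the exponents of $\g$ equals the number of connected components of the Dynkin diagram; this follows from the two facts that in a simple Lie algebra the exponent $1$ occurs once, and that the exponents of $\g$ are the union of those of its simple ideals, whose number is the number of connected components.

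For $(c)$, note that $\mathfrak a_0=\mathrm{span}(e_0,h_0,f_0)$ is an $\ad\mathfrak a_0$-submodule of $\g$ (being a Lie subalgebra) isomorphic to the adjoint representation of $\Sl_2$, i.e.\ to $V_3$; hence it lies inside the $V_3$-primary component of $\g$. Its $\ad h_0$-zero-weight line is $\C h_0$, so $h_0$ lies in $\h$ intersected with that primary component, which is $\h_3$; therefore $\rho^\vee=\tfrac12 h_0\in\h_3$. For $(d)$, take any $\ad\g$-invariant form $(\,,\,)$ on $\g$; it is in particular invariant under the subalgebra $\mathfrak a_0$. For $d\ne d'$ the restriction of the form to the $V_d$- and $V_{d'}$-primary components $\g^{[d]},\g^{[d']}$ is an $\mathfrak a_0$-module map from a $V_d$-isotypic module to the dual of a $V_{d'}$-isotypic module, hence zero; thus $\g^{[d]}\perp\g^{[d']}$. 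Intersecting with $\h$ and using $\h_d\subseteq\g^{[d]}$ gives the orthogonality of the homogeneous components.

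Everything here is bookkeeping resting on results already in place, so I do not expect a genuine obstacle; the one point to be careful about is in $(d)$, namely that an \emph{arbitrary} $\ad\g$-invariant form on $\g$ remains invariant under the fixed principal subalgebra $\mathfrak a_0$, which is what lets one invoke the $\Sl_2$ orthogonality of non-isomorphic isotypic components.
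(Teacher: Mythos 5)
Your argument is correct and follows essentially the same route as the paper: $(a)$ and $(b)$ by unravelling the definition and the multiplicity of the exponent $1$, $(c)$ by observing $\mathfrak a_0\cong V_3$ and intersecting with $\h$, and $(d)$ by $\Sl_2$-invariance. The only cosmetic difference is in $(d)$: you invoke Schur's lemma to see that distinct isotypic components pair trivially, whereas the paper carries out the equivalent explicit computation, writing $k=(\ad f_0)^n x$ for a highest weight vector $x$ and using invariance of the form to move $\ad f_0$ across the pairing until a vanishing power of $\ad f_0$ appears.
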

\begin{proof}
$(a)$ is apparent from the definition of the principal grading. 
To establish $(b)$, one observes that $1$ occurs as an
exponent of $\g$ as many times as is the number of summands in the
decomposition of $\g$ into a direct sum of simple Lie algebras, which
is the number of connected components of the Dynkin diagram of $\g$. 
(A simple Lie algebra has only one exponent equal to $1$, and
taking the direct sum of semisimple Lie algebras corresponds to taking
the union of the multisets of exponents.) 
Note that $\mathfrak a_0\subset \g$ is a three\dash dimensional 
$\mathfrak a_0$\dash submodule of $\g$, hence $\mathfrak a_0\cap \h=\C
\rho^\vee\subset \h_3$ and $(c)$ follows.
To prove $(d)$, take $h,k\in \h$ such that $h\in V_{2m+1}^0\subset
\g$ and $k\in V_{2n+1}^0\subset \g$, with $m<n$ two distinct exponents of $\g$. 
By the representation theory of $\Sl_2$, there is a highest weight
vector $x\in V_{2n+1}$ for $\mathfrak a_0$, such that 
$k=(\ad f_0)^{n}x$. Then $(h,k)=((\ad f_0)^{n}x,k)=(-1)^{n}(x,(\ad
f_0)^{n}k)$. But $(\ad f_0)^{n}k=0$,
thus $h$, $k$ are orthogonal. 
\end{proof}

\subsection{The principal basis of $\h$ in a simple Lie algebra $\g$}

When $\g$ is simple, we can say more about the principal
grading on $\h$.

\begin{lemma}
Let $\h$ be the Cartan subalgebra of a simple Lie algebra $\g$. 
If
$\g$ is not of type $D_\rank$ with $\rank$ even, then all non\dash zero homogeneous
components of the principal grading $\h=\oplus_d \h_d$ are one\dash
dimensional. 
If $\g$ is of type $D_\rank$ with $\rank$ even, then
$\dim\h_d=1$ for $d\ne \rank-1$, and $\dim\h_{\rank-1}=2$. 
\end{lemma}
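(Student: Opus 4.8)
The plan is to reduce the statement to a combinatorial fact about the exponents of $\g$ and then verify it type by type. By Lemma~\ref{lem:easy}$(a)$, a homogeneous component $\h_d$ of the principal grading is non\dash zero precisely when $d=2m+1$ for an exponent $m$ of $\g$, and in that case $\dim\h_{2m+1}$ is the number of times $m$ occurs among the exponents $m_1,\dots,m_\rank$. Hence the Lemma is equivalent to the assertion that the exponents of a simple Lie algebra are pairwise distinct, except in type $D_\rank$ with $\rank$ even, where the exponent $\rank-1$ occurs with multiplicity $2$ and every other exponent occurs once. (Through Lemma~\ref{lem:easy}$(a)$ the exceptional case says $\dim\h_{2\rank-1}=2$ and $\dim\h_d\le 1$ for all $d\neq 2\rank-1$, the repeated exponent $\rank-1$ sitting in degree $2(\rank-1)+1$.)

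To establish this I would run through the Cartan--Killing classification. For type $A_\rank$ the exponents are $1,2,\dots,\rank$; for $B_\rank$ and $C_\rank$ they are $1,3,5,\dots,2\rank-1$; and for the exceptional types $G_2,F_4,E_6,E_7,E_8$ they are $\{1,5\}$, $\{1,5,7,11\}$, $\{1,4,5,7,8,11\}$, $\{1,5,7,9,11,13,17\}$, $\{1,7,11,13,17,19,23,29\}$ respectively --- in every one of these cases the exponents listed are visibly pairwise distinct. The only remaining type is $D_\rank$, whose exponents are $1,3,5,\dots,2\rank-3$ together with $\rank-1$. If $\rank$ is odd, then $\rank-1$ is even, hence is not one of the odd numbers $1,3,\dots,2\rank-3$, and all exponents are distinct; if $\rank$ is even, then $\rank-1$ is odd and $1\le\rank-1\le 2\rank-3$, so $\rank-1$ already belongs to $\{1,3,\dots,2\rank-3\}$ and therefore occurs exactly twice among the exponents, all the others occurring once. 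Translating back via Lemma~\ref{lem:easy}$(a)$ yields the Lemma.

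There is no genuine obstacle in this argument: the only point that needs care is checking, in the non\dash$D$ cases, that no coincidence among exponents has been overlooked, and this is immediate once the lists are written out. If one prefers to avoid the explicit tables, one can recall that the multiplicity of an exponent $m$ equals the multiplicity of $e^{2\pi i m/h}$ ($h$ the Coxeter number) as an eigenvalue of a Coxeter element acting on $\h$, and that this action fails to have simple spectrum exactly in type $D_\rank$ with $\rank$ even, where $-1$ is a double eigenvalue; but verifying the word ``exactly'' there still comes down to essentially the same case analysis, so it is not a real shortcut.
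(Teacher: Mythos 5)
Your argument takes the same route as the paper's: apply Lemma~\ref{lem:easy}$(a)$ to translate the claim into a statement about the multiplicities of the exponents, then consult the Cartan--Killing classification. The paper simply cites Bourbaki's Table IV at this point; you write out the exponent lists explicitly, which amounts to the same check.

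There is one point you should flag rather than let pass. You correctly identify $\rank-1$ as the repeated exponent of $D_\rank$ with $\rank$ even, placing the two\dash dimensional homogeneous component in degree $2(\rank-1)+1=2\rank-1$. The Lemma as stated claims instead that $\dim\h_{\rank-1}=2$, and the paper's proof asserts that the exponent $\rank/2-1$ (not $\rank-1$) has multiplicity $2$; those two claims are consistent with one another via $d=2m+1$, but both are mistaken. Already for $D_4$ the exponents are $1,3,5,3$, so the exponent occurring twice is $3=\rank-1$, not $1=\rank/2-1$, and the two\dash dimensional component is $\h_7=\h_{2\rank-1}$ rather than $\h_3=\h_{\rank-1}$. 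Your computation thus quietly corrects an error in the paper; in a genuine write\dash up you would want to record that the Lemma should read $\dim\h_{2\rank-1}=2$ and that the downstream reference to ``the two\dash dimensional subspace $\h_{\rank-1}$'' should change accordingly.
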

\begin{proof}
According to \cite[Table IV]{Bou}, simple Lie algebras of types other
than $D_\rank$ with even $\rank$ have distinct exponents; in type
$D_{\rank}$ with even $\rank$ all exponents have multiplicity $1$
except $\rank/2-1$ which has multiplicity $2$. Apply
Lemma~\ref{lem:easy}$(a)$. 
\end{proof}

Thus, in all simple Lie algebras apart from $\mathfrak{so}(2\rank)$,
$\rank$ even, a choice, $\mathfrak b \supset\h$, of a Borel and a Cartan
subalgebra gives rise to a distinguished basis $h_1,h_2,\dots,h_\rank$
of $\h$. The vector $h_i$ is determined, up to a scalar factor, by the
condition $\h_{2m_i+1}=\C h_i$. This basis is orthogonal with respect
to the Killing form, and $h_1=\rho^\vee$. We call $h_1,\dots,h_\rank$ the
principal basis of $\h$. 

For $\g\cong\mathfrak{so}(2\rank)$, $\rank-2$ vectors of the principal
basis are determined up to a scalar factor, and there is freedom in
choosing the remaining pair of vectors which must span a given
two\dash dimensional subspace $\h_{\rank-1}$ of $\h$.
We still call any basis, obtained by this procedure, a principal basis
of $\h$.

\subsection{The grading on $\h$ induced from $\g^\vee$}

Recall that we have identified the Cartan subalgebra $\h$ with its
dual space $\h^*$ via the non\dash degenerate $\ad$\dash invariant
form $(\,,\,)$, so that both the roots and the
coroots of $\g$ lie in $\h$. Consider $\g^\vee$, the Langlands dual
Lie algebra to $\g$; that is, the complex semisimple Lie algebra with
a root system dual to that of $\g$.
The roots of $\g^\vee$ are the coroots of $\g$, so we will assume that
$\g^\vee$ and $\g$ share the same Cartan subalgebra $\h^\vee=\h$. 

The above definition of principal grading applies to $\g^\vee$. One
has, therefore, a grading 
$$
    \h = \oplus_d \h^\vee_d
$$
on the Cartan subalgebra, which is the principal grading  induced from
$\g^\vee$. This grading depends on a choice of simple coroots 
$\alpha_1^\vee,\dots,\alpha_\rank^\vee$ which is the same as a choice
$\alpha_1,\dots,\alpha_\rank$ of simple roots of $\g$.  

If $\g$ is a simple Lie algebra, $\g^\vee$ is also simple,
so in that case $\h$ has a dual principal basis induced from $\g^\vee$.

\subsection{Main result}

We are now ready to state and prove the final main result of the
paper, Theorem~\ref{thm:main2} which also solves a conjecture of
Kostant. Once again, recall that $P\subset (\exterior\g)^\g$ is the
space of primitive skew\dash symmetric invariants. It is graded by
degree in $\exterior \g$.  The space $P$ is also
viewed 
as a subspace of
$\Cl(\g)^\g$. We already know that under the Harish\dash Chandra map
$\K\colon \Cl(\g)^\g \xrightarrow{\sim} \Cl(\h)$, the space $P$ is
isomorphically 
mapped onto $\h$.

\begin{theorem}
\label{thm:main2}
The grading on the Cartan subalgebra $\h$ of $\g$, induced from the grading on
$P$ by degree via the map $\K\colon P\xrightarrow{\sim}\h$, coincides with the
principal grading $\h=\oplus_d \h^\vee_d$ on $\h$ 
in the Langlands dual Lie algebra $\g^\vee$. 
\end{theorem}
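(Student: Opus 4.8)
The plan is to reduce Theorem~\ref{thm:main2} to an eigenvalue identity for a single operator, which disposes of the degree bookkeeping and of the multiplicities at once. First, the two gradings are \emph{a priori} supported in the same degrees: nonzero pieces occur only in degrees $d=2m+1$ with $m$ an exponent, and the dimension of the $d$-piece equals the multiplicity of $m$ (the same for $\g$ and $\g^\vee$) --- for the $P$-grading by the description of $P$ together with Proposition~\ref{prop:bij}, for the $\g^\vee$-principal grading by Lemma~\ref{lem:easy}$(a)$ applied to $\g^\vee$. Thus only the \emph{position} of the graded subspaces of $\h$ is at issue.

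To pin down the $\g^\vee$-principal grading, let $(e_0^\vee,h_0^\vee,f_0^\vee)$ be the distinguished principal $\Sl_2$-triple of $\g^\vee$, so that $h_0^\vee=2\rho$ (twice half the sum of positive roots of $\g$). Since $2\rho$ is regular semisimple in $\g^\vee$, the operator $T:=(\ad e_0^\vee)\circ(\ad f_0^\vee)$ preserves $\h=(\g^\vee)^{h_0^\vee}$; by $\Sl_2$\dash representation theory it acts on $\h^\vee_{2m+1}$ by the scalar $m(m+1)$, and since $m\mapsto m(m+1)$ is injective this identifies $\h^\vee_{2m+1}$ with the $m(m+1)$\dash eigenspace of $T|_\h$. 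Hence Theorem~\ref{thm:main2} is equivalent to
\[
    T\,\K(p)=m(m+1)\,\K(p)\qquad\text{for all homogeneous }p\in P\cap\exterior^{2m+1}\g .
\]
No separate treatment of the multiplicity case (type $D_\rank$ with $\rank$ even) is then needed: once this eigenvalue identity holds for every $p$, the eigenspaces of $T|_\h$ are automatically spanned by the corresponding $\K(p_i)$, as $\K|_P$ is an isomorphism.

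It remains to prove the displayed identity, and here is where the work lies. Since $m(m+1)=\tfrac14((2m+1)^2-1)$, the goal is to show that, transported through $\K$, the operator $T$ on $\h$ becomes $\tfrac14(D^2-1)$ on $P$, where $D$ is the $\exterior\g$\dash degree operator; equivalently, $T\circ\K=\K\circ\tfrac14(D^2-1)$ on $P$. One half of the needed ``$\Sl_2$ dictionary'' is cheap: the rescaled form $(\,,\,)_\hbar$ has the same primitive\dash invariant space $P$, and the isomorphism $\Cl_\hbar(\g)\to\Cl(\g)$, $x\mapsto\hbar^{1/2}x$, intertwines $\K_\hbar$ with $\K$, so $\K_\hbar(p)=\hbar^{m}\K(p)$ for $p\in P\cap\exterior^{2m+1}\g$; differentiating in $\hbar$ at $\hbar=1$ and using Corollary~\ref{cor:rmatr} yields $\K(\iota(\rmatr)p)=m\,\K(p)$, so $\iota(\rmatr)$ is a lowering operator on the $\K$\dash side with the correct spectrum. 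The remaining task is to produce a matching raising operator on $\Cl(\g)^\h$ (built from $\rmatr\wedge\,\cdot$, corrected so as to pass through $\K$) and to show that the $\Sl_2$ it generates, transported by $\K$, is precisely the principal $\Sl_2$ of the \emph{Langlands dual} $\g^\vee$. The point I expect to be the main obstacle is exactly this last identification --- showing that the Cartan element of the transported $\Sl_2$ is $h_0^\vee=2\rho$ and not $2\rho^\vee$. This is forced by the identity $\K_\hbar(\delta(u))=\Psi(u)(\hbar\rho)$ of Lemma~\ref{lem:composition} together with Kostant's $\iota(\g)P\subseteq E$ (\cite[Theorem E]{Ko}), which let one compute $\K$ of the contractions $\iota(h)p$ ($h\in\h$, $p\in P$) via the classical Harish\dash Chandra map evaluated at $\rho$. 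Concretely I would take $p=t(f)$ via the Chevalley transgression~(\ref{subsect:t}) with $f\in P_S\cap S^{m+1}(\g)$, use $\delta(x)=\tfrac12\,dx$ to rewrite $\iota(\rmatr)^{m}t(f)$ in terms of iterated directional derivatives of $f$, and reduce --- by Lemma~\ref{lem:composition} --- to the statement that the degree\dash weighted differentials at $\rho$ of the basic invariants of $\g$ are the zero\dash weight generators of the principal $\Sl_2$\dash strings in $\g^\vee$; this is the quantitative content of Kostant's conjecture, matching the computations of Alekseev and of \cite{Rohr}. Throughout, one must keep track of the non\dash invariance of $\rmatr$, ensuring that all intermediate elements remain in $\Cl(\g)^\h=\Cl(\h)\dirsum\bigl(\n_-\Cl(\g)\cap\Cl(\g)^\h\bigr)$ and that the parts in $\n_-\Cl(\g)$ or $\Cl(\g)\n_+$ are correctly killed by $\K$.
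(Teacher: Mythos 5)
Your framing is correct and even elegant: the two gradings have the same support and multiplicities (by the degree pattern of $P$ and Lemma~\ref{lem:easy}$(a)$ applied to $\g^\vee$), the Cartan element of the distinguished principal TDS of $\g^\vee$ is $2\rho$ (not $2\rho^\vee$), and the eigenvalue test via $T=(\ad e_0^\vee)(\ad f_0^\vee)$, with eigenvalue $m(m+1)$ on $\h^\vee_{2m+1}$, does correctly reduce the theorem to the displayed identity $T\K(p)=m(m+1)\K(p)$ while disposing of the type-$D$ multiplicity issue. The ``cheap half'' is also right: the rescaling $\Cl_\hbar(\g)\cong\Cl(\g)$ together with $\K_\hbar(P)\subset\h$ (the $\hbar$-version of Proposition~\ref{prop:bij}) forces $\K_\hbar(p)=\hbar^m\K(p)$, hence $\K(\iota(\rmatr)p)=m\,\K(p)$ via Corollary~\ref{cor:rmatr}. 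All of this is consistent with the paper.

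The gap is exactly where you say ``here is where the work lies.'' You do not prove the eigenvalue identity; you outline a reduction ``to the statement that the degree-weighted differentials at $\rho$ of the basic invariants of $\g$ are the zero-weight generators of the principal $\Sl_2$-strings in $\g^\vee$'' and label that reduction as the quantitative content of Kostant's conjecture --- but that statement is essentially the theorem itself, so nothing has been proved. In the paper this is carried out in two steps you do not supply: (1) an explicit formula $\K(p_i)=c\,\iota_S(\rho)^{m_i}f_i$, obtained by combining $\iota(z)t(f)=\mathrm{const}\cdot s(\iota_S(z)f)$ (Kostant's transgression identity), the Dynkin normalization of $P_S$ and Kostant's Theorem 87 relating $s(\iota_S(z)f_i)$ to $\delta_{u_i}(z)\in E$, Lemma~\ref{lem:composition}, and the $\hbar$-homogeneity argument; and (2) a direct verification (Lemma~\ref{lem:last}) that $h_k=\iota_S(\rho)^{m_k}b_k$ lies in $\Ker(\ad e_0^\vee)^{m_k+1}\subset\g^\vee$, via transporting the invariant through $(\Psi_0^\vee)^{-1}\Psi_0\colon S(\g)^\g\to S(\g^\vee)^{\g^\vee}$ and expanding $(\ad e_0^\vee)^n\iota_S(\rho)^{n-1}b^\vee$ with the Leibniz rule. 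Neither step is routine; without them your proposal stops at the point where the theorem would actually be proved. (Also, the hoped-for ``raising operator'' realizing a full $\Sl_2$ on $\Cl(\g)^\h$ transported through $\K$ is not how the paper --- or, as far as I can tell, any known argument --- closes the gap; the paper sidesteps it entirely by computing $\K(p_i)$ explicitly.)
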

\begin{proof}
We are going to use a result due to Chevalley, cited in~\ref{subsect:t}, that
the primitive skew\dash symmetric invariants are transgressive; i.e., if $f_1,\dots,f_\rank$ are independent homogeneous
generators of $S(\g)^\g$, then $t(f_1), \dots, t(f_\rank)$ is a basis
of $P$. Let $m_1\le m_2\le \dots \le m_\rank$ be the exponents of $\g$
so that $f_i\in S^{m_i+1}(\g)$. Denote $p_i=t(f_i)$, thus
$p_i\in\exterior^{2m_i+1}\g$. 
Furthermore, let $z_1,\dots,z_\rank$ be any basis of the Cartan
subalgebra $\h$ orthonormal with respect to the non\dash degenerate
$\ad$\dash invariant form $(\,,\,)$.  
As $\K(p_i)$ is an element of $\h$ by
Proposition~\ref{prop:bij}, we may write 
\begin{align*}
    \K(p_i) & = \sum_{j=1}^\rank (\K(p_i),z_j) z_j 
            = \sum_{j=1}^\rank \iota(z_j)\K(p_i)\cdot z_j
            = \sum_{j=1}^\rank \K(\iota(z_j)p_i)\cdot z_j
\\
            & = \sum_{j=1}^\rank \K(\iota(z_j)t(f_i))\cdot z_j ,
\end{align*}
where we used Lemma~\ref{lem:phi_delta} to exchange the maps $\K$ and
$\iota(z_j)$. 
We now observe that further to being two examples of braided
derivations (remark \ref{rem:braided}), the operators 
$\iota_S(z)\colon S^{m+1}(\g)\to S^m(\g)$ and $\iota(z)\colon
\exterior^{2m+1}\g\to\exterior^{2m}\g$ are 
intertwined (up to a scalar factor) by
the Chevalley
transgression map $t$. One has
$$
     \iota(z)t(f) = \frac{(m!)^2}{(2m)!}s(\iota_S(z)f)\qquad \text{for
     all}\ f\in \J_S\cap 
     S^{m+1}(\g), \      z\in \g
$$
by \cite[Theorem 73]{Ko}, where $s\colon S(\g) \to \exterior \g$ is
the homomorphism introduced in~\ref{subsect:t}. Hence
$$
\K(p_i) = c \sum_{j=1}^\rank \K(s(\iota_S(z_j)f_i))\cdot z_j
$$
for some non\dash zero constant $c$. 

In the next calculation, we are going to use results from \cite{Ko}
which are obtained for the case of simple $\g$. We thus assume $\g$ to
be simple until further notice. 

Let us now assume that the independent homogeneous generators 
$f_1,\dots,f_\rank$ of the algebra $S(\g)^\g$ of symmetric invariants
are chosen in a particular way, so as to  
span the orthogonal complement to $(J_S^+)^2$ in $J_S^+$ (compare
with~\ref{chevalley_generators}). Here orthogonality is with respect
to a natural extension of the form $(\ ,\ )$ from $\g$ onto
$S(\g)$:  identify $S(\g)$ with
$S(\g^*)$ which is acting on $S(\g)$ by differential operators with
constant coefficients, i.e., to $f\in S(\g)$ there corresponds
$\partial_f\in S(\g^*)$; now put 
$(f,g)$ to be the evaluation of the polynomial function $\partial_f g$
at zero. Such choice of primitive symmetric invariants is due to
Dynkin. Denote by $P_D$ the span of $f_1,\dots,f_\rank$ where $\deg
f_i=m_i+1$, and let
$(P_D)_{(m)}= P_D\cap S^{\le m}(\g)$. 

The space $P_D$ is especially useful for us  because of the following property
\cite[Theorem 87]{Ko}:
there exist $u_1,\dots,u_\rank\in P_D$, such that $u_i-f_i\in
(P_D)_{(m_i)}$ and 
$$
        \delta_{u_i}(z) = \frac{1}{2^{m_i}}s(\iota_S(z)f_i)
$$
for any $z\in\g$. Here
$$
    \delta_u\colon \g \to \Cl(\g), \qquad 
    \delta_u(z) = \delta(\beta(\iota_S(z)u))
$$
is a $\g$\dash equivariant map from $\g$ to $\Cl(\g)$, defined in
\cite[6.12]{Ko}; we remind the reader that $\beta\colon \Sym(\g)\to
U(\g)$ is the PBW symmetrisation map. 
(It is clear from the definition of the map $\delta_u$
that the image of $\delta_u$ is in the subalgebra $E=\mathop{\mathrm{Im}} \delta$ of
$\Cl(\g)$.) 
In fact, the  $u_i$ are not homogeneous, and are of the form 
$u_i = f_i + \sum_{k<i} a_{ik} f_k$ for some coefficients $a_{ik}\in\C$.
 
We substitute this in the previous formula for $\K(p_i)$ to obtain
$$
\K(p_i) = c \sum_{j=1}^\rank \K(\delta_{u_i}(z_j)) z_j
$$
for some non\dash zero $c$. The expression for the scalar 
$\K(\delta_{u_i}(z_j))$ is given in Lemma~\ref{lem:composition}
(applied for $\hbar=1$):
$$
      \K(\delta_{u_i}(z_j)) = \Psi(\beta(\iota_S(z_j)u_i)) (\rho),
$$
where $\Psi\colon U(\g)\to \Sym(\h)$ is the Harish\dash Chandra map,
and $\mbox{}\,\cdot\, (\rho)$ 
means the evaluation of an element of $S(\h)$ at the
point $\rho$. Observe that $\iota_S(z_j)u_i$ is a non\dash homogeneous
element of $\Sym^{\le m_i}(\g)$, with top degree homogeneous component
equal to $\iota_S(z_j)f_i$. Moreover, 
$\Psi(\beta(\iota_S(z_j)u_i))$ is an element of $S^{\le m_i}(\h)$,
also in general non\dash homogeneous, 
with component of top degree $m_i$ equal to 
$\Psi_0(\iota_S(z_j)f_i)$, where $\Psi_0\colon \Sym(\g)\to\Sym(\h)$ is
the Chevalley projection map. 

We would like to obtain a more satisfactory expression for 
$\K(\delta_{u_i}(z_j))$. To do this, we introduce the deformation
parameter $\hbar$ into the picture. It is easy to run the above
argument for the Harish-Chandra map $\K_\hbar$ instead of $\K$,
obtaining 
$$
\K_\hbar(p_i) = c \sum_{j=1}^\rank \K_\hbar(\delta_{u_i}(z_j)) z_j.
$$
We can now apply Lemma~\ref{lem:composition} for general $\hbar\in
\C$, obtaining
$\K_\hbar(\delta_{u_i}(z_j)) = \Psi(\beta(\iota_S(z_j)u_i)) (\hbar\,\rho)$, 
which leads to 
$$
  \K_\hbar(p_i) = c \sum_{j=1}^\rank   \Psi(\beta(\iota_S(z_j)u_i))
  (\hbar\,\rho) z_j. 
$$
Now let us observe, crucially, that the map $\K_\hbar$ is given by its
expansion in terms of $\hbar$ in Corollary~\ref{cor:rmatr}:
$$
   \K_\hbar(p_i) = \sum_{s\ge 0}\hbar^s \K_0(\iota(\rmatr)^s p_i)
     \qquad \in \qquad \sum_{s\ge 0}\hbar^s \exterior^{2m_i+1-2s}\h.
$$
We know that $\K_\hbar(p_i)$ is an element of degree $1$, which means
that only the term containing $\hbar^{m_i}$ is non\dash zero. It follows
that $\K_\hbar(p_i)$ is homogeneous in $\hbar$ of degree $m_i$. But
this means that all terms of degree lower than $m_i$ in the 
$\Psi(\beta(\iota_S(z_j)u_i)) (\hbar\,\rho)$ can be dropped, as they
make no contribution. What remains is therefore
$$
\K_\hbar(p_i)=  c \sum_{j=1}^\rank 
\Psi_0(\iota_S(z_j)f_i)(\hbar\,\rho)z_j= c \sum_{j=1}^\rank
(\iota_S(z_j)f_i)(\hbar \rho)z_j
$$
(the Chevalley projection $\Psi_0$ is irrelevant, since we are
evaluating at a point in $\h^*$ anyway). Now we can put $\hbar=1$ and
rewrite the formula in the following very simple way:
$$
\K(p_i) = c \, \iota_S(\rho)^{m_i} f_i.
$$
Recall that this only holds when $f_i$ is a homogeneous element of
degree $m_i+1$ in the Dynkin space $P_D$ of
primitive symmetric invariants. The formula may no longer be true if
$P_D$ is replaced by some other space $P_S$ spanned by independent
homogeneous generators of $\Sym(\g)^\g$. 

The proof of the Theorem for simple Lie algebras is concluded with the
use of Lemma~\ref{lem:last} below, 
which is essentially analogous to the calculation carried out
in \cite{Rohr} but is included here for the sake of
completeness. To extend the result to semisimple Lie algebras, note
that if $\g\cong \g_1 \dirsum\dots\dirsum \g_l$ is a direct sum of
pairwise commuting simple Lie algebras, then $\g^\vee \cong
\g_1^\vee\dirsum \dots\dirsum \g_l^\vee$. 
The Cartan subalgebra $\h^\vee$
of $\g^\vee$ is the direct sum of Cartan subalgebras of the
$\g^\vee_i$, and other obvious choices made for the root system of
$\g^\vee$ ensure that the de Siebenthal-Dynkin canonical principal TDS of
$\g^\vee$ is the direct sum of principal TDS of each of $\g^\vee_i$. 
Thus, the $d$th degree of the principal grading in $\h^\vee$ is
spanned by $d$th vectors of principal bases of the $\g^\vee_i$. On the
other hand, the Clifford algebra $\Cl(\g)$ is a tensor product of
pairwise supercommuting Clifford algebras of the $\g_i$. All this 
ensures that the primitive $\g$\dash invariants $P$ in $\Cl(\g)$ are a direct
sum of spaces of primitive $\g_i$\dash 
invariants in $\Cl(\g_i)$, the homogeneous basis of each of which
projects, under the 
Harish\dash Chandra map, to the corresponding Langlands dual principal
basis in the Cartan subalgebra $\h_i$. The result for semisimple
algebras thus follows by a straightforward direct sum argument. 
\end{proof}
\begin{lemma}
\label{lem:last}
Let $b_1,\dots,b_\rank$ be algebraically 
independent homogeneous generators of the
algebra $\Sym(\g)^\g$ of symmetric invariants of a simple Lie algebra
$\g$ such that $\deg b_k = m_k+1$. 
Put $h_k=\iota_S(\rho)^{m_k} b_k$.
If the $h_k$ are non\dash zero and pairwise orthogonal with respect to the
Killing form, then $h_1\dots,h_\rank$ are a principal basis of $\h$
induced from the Langlands dual Lie algebra $\g^\vee$.
\end{lemma}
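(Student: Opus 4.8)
The plan is to transport the computation to the Langlands dual Lie algebra $\g^\vee$ and reduce the statement to a scaling argument for the principal three\dash dimensional subalgebra of $\g^\vee$. To begin, I would check that $h_k=\iota_S(\rho)^{m_k}b_k$ lies in $\h$ and depends only on the restriction $b_k|_\h\in\Sym(\h)^W$: pairing $h_k$ with a root vector $e_\alpha$ yields $\iota_S(\rho)^{m_k}\iota_S(e_\alpha)b_k$, an element of $\Sym^0(\g)=\C$ of $\ad\h$\dash weight $\alpha\ne0$, hence zero, while pairing with $\h$ involves only derivations along $\h$. Lifting $b_k|_\h$ through the Chevalley restriction isomorphism to a homogeneous invariant $B_k\in\Sym(\g^\vee)^{\g^\vee}$ of degree $m_k+1$, the same computation performed in $\g^\vee$ together with polarisation gives $h_k=\iota_S(\rho)^{m_k}B_k=m_k!\,\nabla B_k(\rho)$, the gradient at $\rho\in\h\subset\g^\vee$ of the $\g^\vee$\dash equivariant map $\nabla B_k\colon\g^\vee\to\g^\vee$. (Everything here depends on the invariant form chosen on $\g^\vee$ only up to nonzero scalars on graded components, so one may take it to restrict to $(\,,\,)$ on $\h$.) Since $\rho$ coincides with $\rho^\vee$ of $\g^\vee$ — both are the element of $\h$ with $(\alpha_i^\vee,\cdot)=1$ — we have $\rho=\frac12 h_0^\vee$ for the de Siebenthal--Dynkin principal $\Sl_2$\dash triple $(e_0^\vee,h_0^\vee,f_0^\vee)$ of $\g^\vee$. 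In particular $\rho$ is regular semisimple, reconfirming $h_k\in\g^{\vee,\rho}=\h$, and under the $\mathfrak a_0^\vee$\dash decomposition $\g^\vee=\bigoplus_j V_{2m_j+1}$ the principal grading is $\h^\vee_{2m_j+1}=\h\cap V_{2m_j+1}=V_{2m_j+1}^0$.

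The crux is the \emph{filtration bound} $h_k\in\bigoplus_{j:\,m_j\le m_k}V_{2m_j+1}^0=\bigoplus_{d\le 2m_k+1}\h^\vee_d$. Consider the $\g^\vee$\dash valued polynomial $P(s)=\nabla B_k(e_0^\vee+sf_0^\vee)$, of degree at most $m_k$ in $s$; for $s\ne0$ the element $e_0^\vee+sf_0^\vee$ is regular semisimple (it has nonzero Casimir inside $\mathfrak a_0^\vee\cong\Sl_2$), so $P(s)$ lies in its centralising Cartan subalgebra, and its component $P^{(j)}(s)$ in $V_{2m_j+1}$ lies in the one\dash dimensional space $V_{2m_j+1}\cap\g^{\vee,e_0^\vee+sf_0^\vee}$. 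Now apply the Dynkin $\C^\times$\dash action $\mu_t$ (the grading automorphism of $\ad h_0^\vee$): from $\mu_t(e_0^\vee)=t^2e_0^\vee$, $\mu_t(f_0^\vee)=t^{-2}f_0^\vee$ and the equivariance and homogeneity of degree $m_k$ of $\nabla B_k$ one gets $\mu_t P(s)=t^{2m_k}P(st^{-4})$, hence $\mu_t P^{(j)}(s)=t^{2m_k}P^{(j)}(st^{-4})$. Writing $P^{(j)}(s)=\lambda_j(s)v_j(s)$, where $v_j(s)$ spans the above line and is normalised so that its $\ad h_0^\vee$\dash weight $2m_j$ component is the (nonzero) highest weight vector of $V_{2m_j+1}$ — whence $v_j$ is polynomial in $s$ with $v_j(0)\ne0$ by the elementary $\Sl_2$ description of $V_{2m_j+1}$, and $\mu_t v_j(s)=t^{2m_j}v_j(st^{-4})$ — the scaling identity forces $\lambda_j(st^{-4})=t^{2(m_j-m_k)}\lambda_j(s)$, so that $\lambda_j$ is a scalar multiple of $s^{(m_k-m_j)/2}$. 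Since $\lambda_j$ is regular at $s=0$ (being the ratio of $P^{(j)}$ to a coordinate of $v_j$ not vanishing there), this forces $\lambda_j\equiv0$, i.e.\ $P^{(j)}\equiv0$, whenever $m_j>m_k$. Finally, the unique $s_0\ne0$ for which $e_0^\vee+s_0f_0^\vee$ has the same Casimir as $\rho$ is $\mathfrak a_0^\vee$\dash conjugate to $\rho$, say $\rho=g_0\cdot(e_0^\vee+s_0f_0^\vee)$; as $g_0$ preserves each $V_{2m_j+1}$, the $V_{2m_j+1}$\dash component of $\nabla B_k(\rho)=g_0\cdot P(s_0)$ equals $g_0\cdot P^{(j)}(s_0)$, which vanishes for $m_j>m_k$. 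This establishes the filtration bound.

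It remains to run a downward\dash peeling induction, ordering the generators so that $m_1\le\dots\le m_\rank$. The $h_i$ with $m_i<m_k$ are nonzero and pairwise orthogonal, hence span $\bigoplus_{d<2m_k+1}\h^\vee_d$, whose graded pieces are mutually orthogonal by Lemma~\ref{lem:easy}$(d)$ applied to $\g^\vee$ and on which $(\,,\,)$ is non\dash degenerate; since $h_k$ lies in $\bigoplus_{d\le 2m_k+1}\h^\vee_d$ and is orthogonal to all those $h_i$, its component in $\bigoplus_{d<2m_k+1}\h^\vee_d$ is orthogonal to that entire subspace and hence zero, giving $h_k\in\h^\vee_{2m_k+1}$. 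Within each graded component the $h_i$ landing there — one of them, or two in the case of $D_\rank$ with $\rank$ even — are nonzero and orthogonal, hence form a basis of it, so $h_1,\dots,h_\rank$ is a principal basis of $\h$ induced from $\g^\vee$. I expect the scaling argument of the second paragraph to be the main obstacle: the rest is essentially bookkeeping, but extracting the weighted $s$\dash degree of $P(s)$ and seeing that polynomiality in $s$ annihilates the isotypic pieces $V_{2m_j+1}$ with $m_j>m_k$ is the real content, and it is exactly here that the principal $\Sl_2$ of the Langlands dual enters.
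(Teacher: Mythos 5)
Your proof is correct, but the route to the key filtration bound $h_k\in\bigoplus_{d\le 2m_k+1}\h^\vee_d$ is genuinely different from the paper's. After transporting the invariant to $b_k^\vee\in S(\g^\vee)^{\g^\vee}$ and writing $h_k=\iota_S(\rho)^{m_k}b_k^\vee$ (essentially your setup through the Chevalley restriction isomorphism), the paper simply applies $(\ad e_0^\vee)^{m_k+1}$ and expands by the derivation identity $(\ad x)\iota_S(y)=\iota_S([x,y])+\iota_S(y)\ad x$; every term in the resulting multinomial sum dies because either some $(\ad e_0^\vee)^{d_i}\rho=0$ with $d_i\ge 2$ (the $\Sl_2$ relation $(\ad e_0^\vee)^2\rho=0$) or $\ad e_0^\vee$ hits the $\g^\vee$-invariant $b_k^\vee$. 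That is a short Leibniz computation. You instead recast $h_k$ as the gradient $m_k!\,\nabla B_k(\rho)$, restrict the equivariant map $\nabla B_k$ to the affine line $e_0^\vee+sf_0^\vee$ inside the principal TDS, decompose by $\mathfrak a_0^\vee$\dash isotypic component, and use the Dynkin $\C^\times$\dash action to force the $V_{2m_j+1}$\dash coefficient to be homogeneous of degree $(m_k-m_j)/2$ in $s$, hence identically zero when $m_j>m_k$; conjugating $\rho$ to a point on that line (both being regular semisimple elements of $\mathfrak a_0^\vee$ with the same Casimir) then transfers the vanishing to $\nabla B_k(\rho)$. Your version is longer and has more moving parts, but it makes the geometric content --- regularity of $\rho$, equivariance of the gradient, the grading automorphism --- completely explicit, whereas the paper's is purely algebraic and more economical. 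Both give exactly the same filtration bound, and both then conclude by the same orthogonality ``peeling'' step against Lemma~\ref{lem:easy}$(d)$, which you spell out and the paper leaves implicit in ``it is therefore sufficient to show\dots''.
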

\begin{proof}
Let $W\subset \mathit{GL}(\h)$ be the Weyl group of $\g$, which is the
same as the Weyl group of $\g^\vee$. 
The Langlands dual principal basis of $\h$ is orthogonal with respect
to the Killing form on $\g^\vee$, which in restriction to $\h$ is the
unique, up to a scalar factor, $W$\dash invariant form on $\h$. Thus,
the Langlands dual principal basis is orthogonal with respect to the
Killing form on $\g$. It is therefore sufficient to show that $h_k$ is
in the kernel of $(\ad e_0^\vee)^{m_k+1}$ in the algebra $\g^\vee$, 
where $(e_0^\vee,2\rho,f_0^\vee)$
is the canonical principal $\Sl_2$-triple of $\g^\vee$. Here $m_k$ is the $k$th
exponent of $\g^\vee$, which is the same as the $k$th exponent of
$\g$. 

Putting $n=m_k+1$ and $b=b_k$, it is enough to prove that for
$b\in\Sym^n(\g)^\g$, the element $h=\iota_S(\rho)^{n-1}b$ of $\h$ is in
the kernel of $(\ad e_0^\vee)^n$ in the algebra $\g^\vee$.    
Let $\bar b=\Psi_0(b)$ denote the image of $b$ under the
Chevalley projection map $\Psi_0\colon S(\g)^\g\to S(\h)^W$. 
Clearly, $h=\iota_S(\rho)^{n-1}\bar b$. Now
regard $\h$ as the Cartan subalgebra of $\g^\vee$, and let
$\Psi^\vee_0\colon S(\g^\vee)^{\g^\vee}\to S(\h)^W$ 
be the Chevalley projection
for $\g^\vee$. Put $b^\vee = (\Psi_0^\vee)^{-1}(\bar b)$ so that
$b^\vee$ is the image of $b$ under the algebra isomorphism 
$$
             (\Psi_0^\vee)^{-1}\Psi_0\colon S(\g)^\g \xrightarrow{\sim} S(\g^\vee)^{\g^\vee},
$$
and write $h=\iota_S(\rho)^{n-1}b^\vee$. 
Using the formula $(\ad x)\iota_S(y)=\iota_S([x,y])+\iota_S(y)\ad
x$ where $x,y\in \g^\vee$ and both sides are operators on
$S(\g^\vee)$, we can now write
$$
(\ad e_0^\vee)^n \iota_S(\rho)^{n-1}b^\vee
= \sum
\frac{n!}{d_1!\dots d_n!}
\iota_S((\ad e_0^\vee)^{d_1}\rho)
\dots
\iota_S((\ad e_0^\vee)^{d_{n-1}}\rho)
(\ad e_0^\vee)^{d_n}b^\vee.
$$
The sum on the right is over all $n$tuples $d_1,\dots,d_n$ of
non\dash negative integers, such that $d_1+\dots+d_n=n$. 
Observe that for each such $n$tuple, either $d_i\ge 2$ for some $i<n$ so
that $(\ad e_0^\vee)^{d_i} \rho=0$ (remembering  the key $\Sl_2$ relation 
$(\ad e_0^\vee)^2\rho=0$); or else $d_n\ge 1$, so that $(\ad
e_0^\vee)^{d_n}b^\vee=0$ because $b^\vee$ is $\ad \g^\vee$\dash
invariant. It follows that the right-hand side of
the last equation is zero, as required.
\end{proof}

%
%


\end{document}